
\documentclass[12pt, preprint]{amsart}

\usepackage[top=0.9in, bottom=1in, left=1in, right=1in]{geometry}
\usepackage{amsmath, amssymb, amsthm}
\usepackage{verbatim}
\usepackage{graphicx}
\usepackage{enumerate}
\usepackage{mathrsfs}
\usepackage{tcolorbox}
\usepackage{hyperref}
\usepackage{color}
\usepackage{tikz-cd}
\usepackage{csquotes}
\usepackage{hyperref}
\usepackage{bbm}
\usepackage{pdfpages}

\definecolor{darkgreen}{rgb}{0.4,0.0,0.0}

\linespread{1.1}









\newtheorem{thm}{Theorem}[section]
\newtheorem{prop}[thm]{Proposition}
\newtheorem{lem}[thm]{Lemma}
\newtheorem{cor}[thm]{Corollary}

\newcommand{\fA}{\mathfrak A}     \newcommand{\sA}{\mathcal A}
     \newcommand{\sB}{\mathcal B}

\newcommand{\fR}{\mathfrak R}     
     \newcommand{\sS}{\mathcal S}
     
     \newcommand{\sU}{\mathcal U}



\def\XXint#1#2#3{{\setbox0=\hbox{$#1{#2#3}{\int}$ }
\vcenter{\hbox{$#2#3$ }}\kern-.6\wd0}}



\newcommand{\C}{\mathbb{C}}

\newcommand{\N}{\mathbb{N}}



%



\theoremstyle{definition}
\newtheorem{definition}[thm]{Definition}

\newcommand\restr[2]{{
  \left.\kern-\nulldelimiterspace 
  #1 
  \vphantom{\big|} 
  \right|_{#2} 
  }}




\theoremstyle{remark}

\newtheorem{remark}[thm]{Remark}


\numberwithin{equation}{section}

\newtheoremstyle{ser}
{8pt}
{8pt}
{\it}
{}
{\sf}
{:}
{6mm}
{}

\newtheoremstyle{serr}
{8pt}
{8pt}
{\normalfont}
{}
{\sf}
{.}
{6mm}
{}

\theoremstyle{ser}
\newtheorem{claim}{Claim}

\theoremstyle{serr}
\newtheorem{claimpff}{Proof of Claim}

\theoremstyle{ser}
\newtheorem{qn}{Question}







\begin{document}

\title{On products of symmetries in von Neumann algebras}

\author{B V Rajarama Bhat}

\address{Statistics and Mathematics Unit\\
Indian Statistical Institute\\
 8th Mile, Mysore Road\\
  RVCE Post, Bengaluru\\
   Karnataka - 560 059, India}
\email{bhat@isibang.ac.in, bvrajaramabhat@gmail.com}

\author{Soumyashant Nayak}
\address{Statistics and Mathematics Unit\\
Indian Statistical Institute\\
 8th Mile, Mysore Road\\
  RVCE Post, Bengaluru\\
   Karnataka - 560 059, India}
\email{soumyashant@isibang.ac.in, soumyashant@gmail.com}

\author{P Shankar}
\address{ Department of Mathematics\\
Cochin University of Science and Technology\\
Kochi,
Kerala - 682022, India}
\email{shankarsupy@cusat.ac.in, shankarsupy@gmail.com}

\begin{abstract}
Let $\mathscr{R}$ be a type $II_1$ von Neumann algebra. We show that every unitary in $\mathscr{R}$ may be decomposed as the product of six symmetries (that is, self-adjoint unitaries) in $\mathscr{R}$, and every unitary in $\mathscr{R}$ with finite spectrum may be decomposed as the product of four symmetries in $\mathscr{R}$. Consequently, the set of products of four symmetries in $\mathscr{R}$ is norm-dense in the unitary group of $\mathscr{R}$. Furthermore, we show that the set of products of three symmetries in a von Neumann algebra $\mathscr{M}$ is {\bf not} norm-dense in the unitary group of $\mathscr{M}$. This strengthens a result of Halmos-Kakutani which asserts that the set of products of three symmetries in $\mathcal{B}(\mathscr{H})$, the ring of bounded operators on a Hilbert space $\mathscr{H}$, is not the full unitary group of $\mathcal{B}(\mathscr{H})$.
\end{abstract}






\maketitle

\section{Introduction}

In \cite{kadison-unitary}, Kadison initiated the study of infinite unitary groups (that is, the unitary group of factors not of type $I_n$ for $n \in \N$), acknowledging the difficulties arising from the fact they are not locally compact in the norm topology. Having shown various results of a topological flavor, he wonders about the algebraic nature of these groups with particular interest in the case of $II_1$ factors.  In response\footnote{This may be inferred from the comments in \cite[pg. 399]{kadison-unitary} and \cite{halmos-kakutani}.}, Halmos and Kakutani showed in \cite{halmos-kakutani} that in a type $I_{\infty}$ factor, every unitary may be decomposed into the product of four symmetries; We remind the reader that a {\it symmetry} or {\it reflection} is a synonym for a self-adjoint unitary. Moreover, they showed that for purely algebraic reasons the scalar unitary $\exp(\frac{2 \pi i}{3})I$, being a central element of order $3$, cannot be decomposed as the product of three symmetries.

It is natural to wonder whether similar conclusions hold for other von Neumann algebras. For $n \in \N$, since every symmetry in the type $I_n$ factor $M_n(\C)$ has determinant $\pm 1$, every product of symmetries in $M_n(\C)$ must be an $n \times n$ unitary matrix with determinant $\pm 1$. Conversely, in \cite[Theorem 3]{radjavi}, Radjavi showed that any $n  \times n$ unitary matrix with determinant $\pm 1$ can be decomposed as the product of four symmetries in $M_n(\C)$. In \cite{fillmore}, Fillmore showed that any unitary in a properly infinite von Neumann algebra can be decomposed as the product of four symmetries; in particular, this takes care of the case of type $I_{\infty}$, type $II_{\infty}$, and type $III$ von Neumann algebras. In their approach, Halmos and Kakutani, and Fillmore essentially use the well-known fact that every permutation of a non-empty set can be written as the product of two involutions (permutations of order two); In the context of permutations of an orthonormal basis $\sB$ of a Hilbert space $\mathscr{H}$, it tells us that every unitary on $\mathscr{H}$ that permutes some orthonormal basis of $\mathscr{H}$ (such as ``shift" operators) can be written as the product of two symmetries.  

From the work of Broise (see \cite{broise}), it is known that every unitary in a type $II_1$ factor is the product of finitely many symmetries. From Dowerk and Thom's refinement of Broise's result (see \cite[Theorem 3.1]{dowerk_thom}), we may infer that every unitary in a type $II_1$ factor may be decomposed as the product of $16$ symmetries. In this article, our main goal is to investigate the minimal such number in this context. Furthermore, we directly work with von Neumann algebras of type $II_1$ rather than factors of type $II_1$.

The key results of this article are summarized below.
\vskip 0.05in
\begin{enumerate}[(i)]
    \item {[}Theorem \ref{thm:mainthm1}{]} A unitary in a type $I_n$ von Neumann algebra $\mathscr{R}$ can be decomposed as the product of four symmetries in $\mathscr{R}$ if and only if its center-valued determinant (see Definition \ref{def:cv_det}) is a central symmetry.
    \item {[}Theorem \ref{thm:mainthm2}{]} In a type $II_1$ von Neumann algebra $\mathscr{R}$, every unitary is the product of six symmetries in $\mathscr{R}$.
    \item {[}Theorem \ref{thm:mainthm3}{]} In a type $II_1$ von Neumann algebra $\mathscr{R}$, every unitary with finite spectrum is the product of four symmetries in $\mathscr{R}$.
    \item {[}Theorem \ref{thm:mainthm4}{]} In a von Neumann algebra $\mathscr{M}$, the set of unitaries which can be decomposed as the product of three symmetries in $\mathscr{M}$ is {\bf not} norm-dense in the unitary group of $\mathscr{M}$.
\end{enumerate}
 
Before proceeding further, we set up some notation used throughout this article.
\vskip 0.1in
\noindent {\bf Notation}: The set of complex numbers of unit modulus is denoted by $S^1$. For $n \in \N$, we denote the set of $n \times n$ complex matrices by $M_n(\C)$ and the group of unitary matrices in $M_n(\C)$ by $U(n)$. The spectrum of an operator $T$ is denoted by $\mathrm{sp}(T)$. For a subset $A$ of a von Neumann algebra $\mathscr{M}$, we denote the norm-closure of $A$ by $A^{=}$. The symbol $\sim$ describes the Murray-von Neumann equivalence relation for projections in a given von Neumann algebra. Let $\mathscr{R}$ be a finite von Neumann algebra with center $\mathscr{C}$ and $\tau : \mathscr{R} \to \mathscr{C}$ denote the canonical center-valued trace. The restriction of $\tau$ to the set of projections in $\mathscr{R}$ is called the (center-valued) {\it dimension function} (see \cite{kadison-ringrose2}) which we denote by $\mathrm{dim}_{\tau}$.  For a von Neumann algebra $\mathscr{M}$, we denote the group of unitary operators in $\mathscr{M}$ by $\mathcal{U}(\mathscr{M})$, and the set of symmetries in $\mathcal{U}(\mathscr{M})$ by $\mathcal{S}(\mathscr{M})$.
\vskip 0.1in

 For a positive integer $n$, let $\mathcal{S}(\mathscr{M})^n$ denote the set of unitaries that can be decomposed as the product of $n$ symmetries in $\mathscr{M}$. Since the identity operator is a symmetry, it is straightforward to see that $$\mathcal{S}(\mathscr{M}) \subseteq \mathcal{S}(\mathscr{M})^2 \subseteq \cdots \subseteq \mathcal{S}(\mathscr{M})^n \subseteq \cdots \subseteq \bigcup_{n \in \N} \sS (\mathscr{M})^n \subseteq \mathcal{U}(\mathscr{M}).$$
Recasting our previous discussion in this notation, we observe that when $\mathscr{M}$ is a von Neumann algebra of type $I_{\infty}$, type $II_{\infty}$ or type $III$ we have $\mathcal{S}(\mathscr{M})^4 = \mathcal{U}(\mathscr{M})$. In Theorem \ref{thm:mainthm1} of this article, by appropriately adapting Radjavi's strategy (cf. \cite{radjavi}) for a type $I_n$ von Neumann algebra $\mathscr{M}$, we provide a simple characterization of $\sS (\mathscr{M})^4$ and note that $\sS (\mathscr{M})^4 = \bigcup_{n \in \N} \sS (\mathscr{M})^n$.

Let $\mathscr{R}$ be a type $II_1$ von Neumann algebra. In view of the previously discussed results for the other ``types" of von Neumann algebras, it is tempting to conjecture that $\sU (\mathscr{R}) = \sS (\mathscr{R})^4$. In Theorem \ref{thm:mainthm2}, we show that every unitary in $\mathscr{R}$ is the product of six symmetries in $\mathscr{R}$, that is, $\mathcal{U}(\mathscr{R}) = \sS (\mathscr{R})^6$. Furthermore, in Theorem \ref{thm:mainthm3}, we observe that every unitary in $\mathscr{R}$ with finite spectrum belongs to $\mathcal{S}(\mathscr{R})^4$. Since every unitary in $\mathscr{R}$ may be norm-approximated arbitrarily closely by a unitary in $\mathscr{R}$ with finite spectrum, we conclude that $\mathcal{S}(\mathscr{R})^4$ is norm-dense in $\mathcal{U}(\mathscr{R})$. The sequence $\{ \sS (\mathscr{R})^n \}_{n \in \N}$ of subsets of $\sU (\mathscr{R})$ transitions from a norm-dense subset of $\sU (\mathscr{R})$ at $n=4$ to all of $\sU (\mathscr{R})$ at $n=6$. This motivates us to investigate the evolution of the sets $\sS (\mathscr{R})^n$ starting from $n=1$.

Let $\mathscr{M}$ be a von Neumann algebra. In Proposition \ref{prop:prod-two-symm}, we characterize $\mathcal{S}(\mathscr{M})^2$ as the set of unitaries in $\mathscr{M}$ that are unitarily equivalent in $\mathscr{M}$ to their adjoint; in particular, the spectrum of a unitary in $\sS (\mathscr{M})^2$ is symmetric about the real axis. In Theorem \ref{thm:mainthm4}, we use the above characterization of $\sS (\mathscr{M})^2$ to show that a unitary with spectrum contained in exactly one of the four connected components of $S^1 \backslash \{ 1, i, -1, -i \}$ does not belong to $\mathcal{S}(\mathscr{M})^3$. Since such unitaries form a norm-open subset of $\mathcal{U}(\mathscr{M})$, we conclude that $\big( \mathcal{S}(\mathscr{M})^3 \big)^{=} \ne \sU (\mathscr{M})$, that is,  $\mathcal{S}(\mathscr{M})^3$ is {\bf not} norm-dense in $\mathcal{U}(\mathscr{M})$. This gives us a stronger version of the conclusion by Halmos and Kakutani in \cite{halmos-kakutani} that $\mathcal{S} \big( \mathcal{B}(\mathscr{H}) \big)^3 \ne \mathcal{U} \big( \mathcal{B}(\mathscr{H}) \big)$, where $\sB (\mathscr{H})$ is the ring of bounded operators acting on a Hilbert space $\mathscr{H}$.

At the very end, we discuss some open questions and conjectures for further exploration, the main question of interest being whether $\mathcal{S}(\mathscr{R})^4 = \mathcal{U}(\mathscr{R})$ for every type $II_1$ von Neumann algebra $\mathscr{R}$.

\section{Products of symmetries in a type $I_n$ von Neumann algebra}

For a positive integer $n$, let $\mathscr{R}$ be a type $I_n$ von Neumann algebra with center $\mathscr{C}$. It is well-known that $\mathscr{R} \cong M_n(\mathscr{C})$ (see \cite[Theorem 6.6.5]{kadison-ringrose2}). Since $\mathscr{C}$ is an abelian von Neumann algebra, there is a hyperstonean space $X$ such that $\mathscr{C} \cong C(X)$, the space of complex-valued continuous functions on $X$; hence, $\mathscr{R} \cong M_n \big( C(X) \big) \cong C \big( X; M_n(\C) \big)$, the space of $M_n(\C)$-valued continuous functions on $X$.

\begin{definition}
\label{def:cv_det}
Let $\det : M_n(\C) \to \C$ denote the usual determinant function. For $f \in C\big( X; M_n(\C) \big)$, the function $\det \circ f$ is in $C(X)$. We call the map $f (\in \mathscr{R}) \mapsto \det \circ f (\in \mathscr{C})$ as the {\it center-valued determinant} on $\mathscr{R}$ and denote it by $\det_c : \mathscr{R} \to \mathscr{C}$.
\end{definition}

\begin{remark}
\label{rmrk:detc}
It is straightforward to verify the following properties of $\det _c : \mathscr{R} \to \mathscr{C}$ from analogous properties of $\det : M_n(\C) \to \C$:
\begin{enumerate}[(i)]
    \item $\det_c(AB) = \det_c (A)  \det_c (B)$ for $A, B  \in \mathscr{R}$.
    \item $\det _c(A^*) = \det_c(A)^*$ for $A \in \mathscr{R}$.
\end{enumerate}
\end{remark}

\begin{remark}
\label{rmrk:detc_cont}
For $f = (f_{jk})_{1 \le j, k \le n} \in M_n \big( C(X) \big)$ with $f_{jk} \in C(X)$, note that
$$\mathrm{det}_c(f)(x) = \sum_{\sigma \in S_n} \Big( \mathrm{sgn}(\sigma) \prod_{j = 1}^n f_{j \sigma(j)}(x) \Big), \textrm{ for } x \in X.$$
From the above formula, it is straightforward to see that $\det _c : \mathscr{R} \to \mathscr{C}$ is norm-continuous.
\end{remark}

Using Radjavi's strategy in the case of $M_n(\C)$ (cf. \cite[Theorem 3]{radjavi} and Kadison's diagonalization theorem for normal matrices over von Neumann algebras, in the theorem below we completely characterize all unitaries in a type $I_n$ von Neumann algebra which can be written as the product of finitely many symmetries.

\begin{thm}
\label{thm:mainthm1}
\textsl{
Let $\mathscr{R}$ be a type $I_n$ von Neumann algebra with center $\mathscr{C}$. Then a unitary $U$ in $\mathscr{R}$ can be decomposed as the product of four symmetries if and only if $\det_c(U)$ is a symmetry in $\mathscr{C}$.
}
\end{thm}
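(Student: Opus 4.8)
I will prove the two implications separately, the forward one being a short computation and the converse requiring a constructive diagonalization argument.

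For the forward direction, suppose $U = S_1 S_2 S_3 S_4$ with each $S_i$ a symmetry in $\mathscr{R}$. Using the multiplicativity and adjoint-compatibility of $\det_c$ recorded in Remark \ref{rmrk:detc}, each $\det_c(S_i)$ is self-adjoint and satisfies $\det_c(S_i)^2 = \det_c(S_i^2) = \det_c(I) = 1$, so it is a symmetry in $\mathscr{C}$. Since $\mathscr{C}$ is abelian, the product $\det_c(U) = \det_c(S_1)\det_c(S_2)\det_c(S_3)\det_c(S_4)$ is again a symmetry in $\mathscr{C}$. I note in passing that this argument uses nothing about the number four: any product of symmetries has $\det_c$-value a central symmetry, so the condition is genuinely necessary.

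For the converse, the plan is to reduce to a diagonal unitary and then exhibit four symmetries explicitly. Since $U$ is normal in $\mathscr{R}\cong M_n(\mathscr{C})$, Kadison's diagonalization theorem supplies a unitary $W\in\mathscr{R}$ with $W^*UW = D = \mathrm{diag}(u_1,\dots,u_n)$, where each $u_j\in\mathcal{U}(\mathscr{C})$. As conjugation by $W$ carries a product of four symmetries to a product of four symmetries, and $\det_c$ is conjugation-invariant, it suffices to decompose $D$, where now $\det_c(D) = \prod_j u_j =: \epsilon$ is a central symmetry, i.e. $\epsilon^2 = 1$. The key device is the cyclic shift $V$ defined by $Ve_j = e_{j+1}$ (indices mod $n$): then $DV$ is a weighted cyclic shift satisfying $(DV)^n = \epsilon I$, and because $\epsilon$ is $\{\pm1\}$-valued, the roots of $z^n = \epsilon$ are symmetric about the real axis. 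This is exactly the condition characterizing products of two symmetries (cf. the characterization of $\mathcal{S}(\mathscr{M})^2$). Since $V^{-1}$ is a permutation unitary and hence also a product of two symmetries, the identity $D = (DV)V^{-1}$ expresses $D$ as a product of four symmetries.

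To make this globally continuous over $X$, I would render both halves explicit rather than invoke the abstract characterization. First normalize the weighted shift: choose a locally constant $c$ with $c^n=\epsilon$ (take $c=1$ on the clopen set $\{\epsilon=1\}$ and $c=e^{i\pi/n}$ on $\{\epsilon=-1\}$), and build a diagonal unitary $\Lambda=\mathrm{diag}(\lambda_1,\dots,\lambda_n)\in\mathscr{R}$ by $\lambda_1=1$, $\lambda_{j+1}=\lambda_j u_{j+1}/c$; the cyclic consistency condition is precisely $c^n=\epsilon$, and one checks $\Lambda^*(DV)\Lambda = cV$. Next decompose the constant-weight shift as $cV=\tilde S_1\tilde S_2$, where $\tilde S_2 e_j = \mu^{j} e_{-j}$ and $\tilde S_1 e_j = c\mu^{j} e_{1-j}$ with $\mu=\bar c/c$ (so $\mu^n=1$); a direct check using $c\mu=\bar c$ shows both $\tilde S_1,\tilde S_2$ are symmetries, including the fixed-point cases. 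Writing $V^{-1}=R_2R_1$ for the unweighted ($c=1$) reflections $R_2e_j=e_{-j}$, $R_1e_j=e_{1-j}$, one assembles
\[
D = (\Lambda\tilde S_1\Lambda^*)(\Lambda\tilde S_2\Lambda^*)\,R_2\,R_1,
\]
a product of four symmetries in $\mathscr{R}$, all depending continuously on the $u_j$ and on $\epsilon$.

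The main obstacle, and the reason the cyclic-shift device is worth the detour, is continuity over the hyperstonean space $X$. A naive pointwise application of Radjavi's matrix result would diagonalize each $D(x)$ and extract $n$-th roots of its eigenvalues; but the $u_j$ may braid as $x$ varies, so such roots need not exist continuously, and the per-point symmetries need not glue into elements of $C(X;M_n(\C))$. The construction above sidesteps this entirely: the only root extracted is a single $n$-th root of the locally constant $\{\pm1\}$-valued $\epsilon$, while $\Lambda$ is built from products of the given continuous $u_j$ with no branch choices. The remaining nontrivial ingredient is Kadison's diagonalization theorem, which is what guarantees a genuine diagonalizing unitary $W\in\mathscr{R}$ in the first place.
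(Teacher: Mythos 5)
Your proof is correct, but your converse takes a genuinely different route from the paper's. The paper, after the same Kadison diagonalization, factors $D=\mathrm{diag}(\lambda_1,\dots,\lambda_n)$ as a product of two \emph{diagonal} unitaries $V$ and $W$ whose entries telescope: the entries of each factor pair up into conjugate pairs $(a,a^*)$, every such $2\times 2$ block is a product of two symmetries by Remark \ref{rmrk:symprod_vNa}, and the two leftover $1\times 1$ entries are exactly $1$ and $\det_c(U)$ --- which is where the hypothesis that $\det_c(U)$ is a central symmetry enters. You instead write $D=(DV)V^{-1}$ with $V$ the cyclic shift, conjugate the weighted shift $DV$ by an explicit diagonal $\Lambda$ to the constant-weight shift $cV$ (your observation that the cocycle consistency condition is precisely $c^n=\epsilon$ is the point where the hypothesis enters for you), and then exhibit $cV$ and $V^{-1}$ as products of explicit reflections; this is closer in spirit to the Halmos--Kakutani use of shifts as products of two involutions. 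Both arguments handle continuity over the hyperstonean space $X$ correctly: the paper's because all its formulas are polynomial in the $\lambda_j$ and their adjoints, yours because the only root extraction is of the locally constant $\epsilon$. What each buys: the paper's decomposition is shorter and reduces everything to a single $2\times2$ remark, while yours produces completely explicit symmetries and isolates the obstruction (a single branch choice on a clopen set). One blemish worth flagging: your intermediate assertion that spectral symmetry about the real axis ``is exactly the condition characterizing products of two symmetries'' is false as stated --- Proposition \ref{prop:prod-two-symm} characterizes $\mathcal{S}(\mathscr{M})^2$ by unitary equivalence of $U$ with $U^*$, and spectral symmetry is only a necessary condition --- but since you explicitly discard that appeal and verify $cV=\tilde S_1\tilde S_2$ and $V^{-1}=R_2R_1$ by direct computation (including the fixed-point entries, which are indeed $\pm 1$), the gap never propagates into your actual argument.
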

\begin{proof}
As discussed above, let $X$ be the hyperstonean space such that $\mathscr{C} \cong C(X)$ and $\mathscr{R} \cong C\big( X; M_n(\C) \big)$. Note that every unitary in $\mathscr{R}$ corresponds to a $U(n)$-valued continuous function on $X$. Let $R : X \to U(n)$ be a symmetry in $\mathscr{R}$. Since $R^2 = I$ and $R = R^*$, from Remark \ref{rmrk:detc}, $\det_c (R)^2 = I$ and $\det_c(R)$ is self-adjoint. Thus $\det R(x) \in \{ \pm 1\}$ for all $x \in X$, or equivalently, the range of $\det_c (R) : X \to S^1$ is in $\{ \pm 1 \}$. Again from Remark \ref{rmrk:detc}, it is straightforward to see that if the unitary $U : X \to U(n)$ in $\mathscr{R}$ is the product of finitely many symmetries in $\mathscr{R}$, then the range of $\det_c(U) : X \to S^1$ is $\{ \pm 1 \}$ (or equivalently, $\det _c (U)$ is a symmetry in $\mathscr{C}$). This proves one direction of the assertion. 

Let $U$ be a unitary in $\mathscr{R} \cong M_n(\mathscr{C})$ such that $\det _c (U)$ is a symmetry in $\mathscr{C}$. By Kadison's diagonalization theorem (see \cite[Lemma 3.7, Theorem 3.19]{diag_kadison}), $U$ is unitarily equivalent to a diagonal matrix in $M_n(\mathscr{C})$. Hence we may assume that $U = \mathrm{diag} (\lambda _1, \ldots, \lambda _n)$ where $\lambda _j : X \to S^1$ is in $\mathscr{C} \cong C(X)$ for $1 \le j \le n$. Define unitary matrices $V$ and $W$ in $M_n(\mathscr{C})$ by
\begin{align*}
    V &:= \mathrm{diag} \Big( \lambda _1, \lambda_1 ^*, \ldots, \prod_{j=1}^{2 k + 1} \lambda_{j}, \Big( \prod_{j=1}^{2 k +1} \lambda_{j} \Big)^*, \ldots \Big),\\
    W &:= \mathrm{diag} \Big( 1, (\lambda _1 \lambda _2), (\lambda_1 \lambda_2) ^*, \ldots, \prod_{j=1}^{2 k} \lambda_{j}, \Big( \prod_{j=1}^{2 k} \lambda_{j} \Big)^*, \ldots \Big).
\end{align*}
Clearly $V, W \in \mathscr{R}$ and it is straightforward to verify by entry-wise multiplication of the diagonals that $U = VW$.
If $n$ is even (with $n=2\ell$), then all the entries in $V$ appear in conjugate pairs and the corresponding principal diagonal blocks may be decomposed as the product of two symmetries in $M_2(\mathscr{C})$ (see Remark \ref{rmrk:symprod_vNa}). The first diagonal entry of $W$ is the constant function $1$, which is a central symmetry, and the last diagonal entry is $\lambda _1 \lambda _2 \cdots \lambda _{2 \ell} = \det_c(U)$ which is a central symmetry by hypothesis. The remaining entries of $W$ appear in conjugate pairs and as observed before, the corresponding principal diagonal blocks may be decomposed as the product of two symmetries in $M_2(\mathscr{C})$. 

If $n$ is odd, then the last diagonal entry of $V$ is $\det_c(U)$, which is a central symmetry, and the remaining entries appear in conjugate pairs. For $W$, the first diagonal entry is $1$, which is a central symmetry, and the remaining entries appear in conjugate pairs. A similar argument as in the previous paragraph proves the desired result.

In summary, both $V$ and $W$ may be decomposed as the product of two symmetries in $M_n(\mathscr{C})$ and thus $U = VW$ may be decomposed as the product of four symmetries in $\mathscr{R}$.
\end{proof}

\begin{cor}
\textsl{
For a type $I_n$ von Neumann algebra $\mathscr{R}$, $\sS (\mathscr{R})^4$ is {\bf not} norm-dense in $\sU(\mathscr{R})$.
}
\end{cor}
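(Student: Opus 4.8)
The plan is to leverage Theorem \ref{thm:mainthm1} together with the norm-continuity of $\det_c$ recorded in Remark \ref{rmrk:detc_cont}. By Theorem \ref{thm:mainthm1}, every $V \in \sS(\mathscr{R})^4$ has the property that $\det_c(V)$ is a symmetry in $\mathscr{C}$; viewing $\mathscr{C} \cong C(X)$, this means the continuous function $\det_c(V) : X \to S^1$ takes values in $\{ \pm 1 \}$. The idea is to produce a single unitary $U_0 \in \mathscr{R}$ whose center-valued determinant is uniformly bounded away from $\{ \pm 1 \}$, and then transport this separation back to the unitary group via the continuity of $\det_c$.

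Concretely, I would take $U_0 := \mathrm{diag}(i, 1, \ldots, 1) \in M_n(\mathscr{C})$, regarded as the constant $U(n)$-valued function on $X$. Then $\det_c(U_0)$ is the constant function $x \mapsto i$, a central unitary that is plainly not a symmetry. For any $V \in \sS(\mathscr{R})^4$ and any $x \in X$ we have $\det_c(V)(x) \in \{ 1, -1 \}$, so the pointwise distance satisfies $|\, i - \det_c(V)(x) \,| = \sqrt{2}$; hence $\norm{\det_c(U_0) - \det_c(V)} = \sqrt{2}$ for every such $V$.

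Finally I would invoke the norm-continuity of $\det_c$ at $U_0$ (Remark \ref{rmrk:detc_cont}): there exists $\delta > 0$ such that $\norm{U - U_0} < \delta$ implies $\norm{\det_c(U) - \det_c(U_0)} < \sqrt{2}$. By the previous paragraph, no such $U$ can lie in $\sS(\mathscr{R})^4$, so the open ball of radius $\delta$ about $U_0$ is disjoint from $\sS(\mathscr{R})^4$. This exhibits a nonempty norm-open subset of $\sU(\mathscr{R})$ missing $\sS(\mathscr{R})^4$, proving that $\sS(\mathscr{R})^4$ is not norm-dense in $\sU(\mathscr{R})$.

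There is essentially no deep obstacle here; the only point requiring care is that one cannot bound $\norm{U - V}$ from below by $\norm{\det_c(U) - \det_c(V)}$ directly, as there is no reverse Lipschitz estimate for $\det_c$. The argument must therefore run through the continuity of $\det_c$ in the contrapositive direction (pulling back an open separation in $\mathscr{C}$ to an open set in $\mathscr{R}$) rather than by a naive comparison of the unitaries themselves.
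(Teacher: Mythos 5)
Your proof is correct and takes essentially the same route as the paper: both arguments combine Theorem \ref{thm:mainthm1} with the norm-continuity of $\det_c$ (Remark \ref{rmrk:detc_cont}) to exhibit a unitary whose center-valued determinant is not a symmetry and to separate it by a norm-open neighborhood from $\sS(\mathscr{R})^4$. The only cosmetic differences are the choice of witness (the paper uses $\exp(\pi i \tfrac{1}{m})I$ for $m$ coprime to $n$, you use $\mathrm{diag}(i,1,\ldots,1)$) and that the paper packages the continuity step as the stronger assertion that $\sS(\mathscr{R})^4$ is norm-closed, whereas you unpack it as an explicit $\varepsilon$--$\delta$ separation around $U_0$.
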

\begin{proof}
Let $m$ be an integer coprime to $n$. Note that $\det_c \big( \exp( \pi i \frac{1}{m}) I \big) = \exp( \pi i \frac{n}{m}) I$, which is not a central symmetry. From Theorem \ref{thm:mainthm1}, $\exp( \pi i \frac{1}{m}) I \in \sU (\mathscr{R}) \backslash \sS (\mathscr{R})^4$.

 Since $\det _c : \mathscr{R} \to \mathscr{C}$ is a norm-continuous map (see Remark \ref{rmrk:detc_cont}) and the central symmetries form a norm-closed subset of $\mathscr{C}$, from Theorem \ref{thm:mainthm1} we conclude that $\sS (\mathscr{R})^4$ is norm-closed. Thus the assertion follows.
\end{proof}

\section{Products of symmetries in a type $II_1$ von Neumann algebra}

\subsection{Decomposition of a unitary as a product of six symmetries}
En route to Theorem \ref{thm:mainthm2}, we begin with some preparatory results.

\begin{lem}
\label{lem:inf_div1}
\textsl{
Let $\mathscr{M}$ be a von Neumann algebra acting on the Hilbert $\mathscr{H}$. Let $(E_n)_{n \in \N}$ be a sequence of mutually orthogonal projections in $\mathscr{M}$, and $(A_n)_{n \in \N}$ be a sequence of operators in $\mathscr{M}$ whose operator-norms are uniformly bounded above, that is, there exists an $\alpha > 0$ such that $\|A_n \| \le \alpha$ for all $n \in \N$. Then $\lim_{k \to \infty} \sum_{j=1}^{k} E_j A_j E_j (=: \sum_{j=1}^{\infty} E_j A_j E_j) $ exists in the strong-operator topology (and hence, in the weak-operator topology) and is an operator in $\mathscr{M}$.
}
\end{lem}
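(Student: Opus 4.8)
The plan is to exploit the mutual orthogonality of the ranges of the projections $E_j$ in order to reduce strong-operator convergence of the partial sums to the convergence of an orthogonal series in $\mathscr{H}$, and then to invoke the strong-operator closedness of $\mathscr{M}$ to conclude that the limit lies in $\mathscr{M}$.

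First I would set $T_k := \sum_{j=1}^{k} E_j A_j E_j$, which lies in $\mathscr{M}$ for each $k$ as a finite sum of products of elements of $\mathscr{M}$. Fix $\xi \in \mathscr{H}$. The key observation is that the vectors $E_j A_j E_j \xi$ lie in the mutually orthogonal subspaces $E_j \mathscr{H}$, so for $m > k$ the Pythagorean identity yields
\[
\|(T_m - T_k)\xi\|^2 = \sum_{j=k+1}^{m} \|E_j A_j E_j \xi\|^2 .
\]
Using the estimate $\|E_j A_j E_j \xi\| \le \|A_j\|\,\|E_j \xi\| \le \alpha \|E_j \xi\|$ together with the fact that $\sum_{j} \|E_j \xi\|^2 \le \|\xi\|^2$ (which holds because the partial sums $\sum_{j=1}^{k} E_j$ are projections dominated by the identity), the right-hand side is bounded by $\alpha^2 \sum_{j=k+1}^{m} \|E_j \xi\|^2$, a tail of a convergent series, which tends to $0$ as $k \to \infty$. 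Hence $(T_k \xi)$ is Cauchy in $\mathscr{H}$ and converges; defining $S\xi$ to be its limit produces a well-defined linear map, and since $\|T_k \xi\|^2 = \sum_{j=1}^{k} \|E_j A_j E_j \xi\|^2 \le \alpha^2 \|\xi\|^2$, passing to the limit gives $\|S\xi\| \le \alpha \|\xi\|$. Thus $S$ is a bounded operator with $\|S\| \le \alpha$ and, by construction, $T_k \to S$ in the strong-operator topology.

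Finally, since $\mathscr{M}$ is a von Neumann algebra it is closed in the strong-operator topology, so the limit $S$ of the sequence $(T_k)$ contained in $\mathscr{M}$ again lies in $\mathscr{M}$; strong-operator convergence automatically implies weak-operator convergence, giving the parenthetical assertion. I do not expect a genuine obstacle here: the only point requiring attention is confirming that the series in question is truly orthogonal, which is precisely the hypothesis that the $E_j$ are mutually orthogonal projections, and the elementary bound $\sum_j \|E_j \xi\|^2 \le \|\xi\|^2$ together with the uniform norm bound on the $A_j$ does all the work.
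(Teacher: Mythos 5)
Your proof is correct and follows essentially the same route as the paper: the Pythagorean identity for the orthogonal ranges $E_j\mathscr{H}$, the bound $\alpha^2\sum_j\|E_j\xi\|^2$, pointwise Cauchy-ness, and strong-operator closedness of $\mathscr{M}$. Your explicit verification that $\|S\|\le\alpha$ (so the limit is a bounded operator) is a small point the paper leaves implicit, but otherwise the arguments coincide.
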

\begin{proof}
Define $A^{(k)} := \sum_{j=1}^{k} E_j A_j E_j$. Let $x \in \mathscr{H}$.
For positive integers $k_1 < k_2$, note that 
\begin{align*}
  \big\| \big( A^{(k_2)} - A^{(k_1)} \big) x \big\|^2 &= \Big\| \sum_{j=k_1+1}^{k_2} E_j A_j E_j x \Big\|^2 = \sum_{j=k_1 + 1}^{k_2} \|E_j A_j E_j x\|^2 \\
  &\le (\alpha ^2) \sum_{j=k_1 + 1}^{k_2} \|E_j x\|^2 
\end{align*}

Furthermore, we have $\sum_{j=1}^k \|E_j x\|^2$ is a Cauchy sequence converging to $\big\| \big( \sum_{j=1}^{\infty} E_j \big) x \big\|^2$. Thus $A^{(k)}x$ is Cauchy in norm and converges to a vector which we label $Ax$. In this notation, we observe that $A = \sum_{j=1}^{\infty} E_j A E_j$, is the strong-operator limit of the sequence of operators, $\big( A^{(k)} \big)_{k \in \N}$, in $\mathscr{M}$. As $\mathscr{M}$ is strong-operator closed, $A$ is an operator in $\mathscr{M}$.
\end{proof}

\begin{prop}
\label{prop:diag_prod}
\textsl{
Let $\mathscr{M}$ be a von Neumann algebra acting on the Hilbert $\mathscr{H}$ and $(E_n)_{n \in \N}$ be a sequence of mutually orthogonal projections in $\mathscr{M}$. Let $(A_n)_{n \in \N}, (B_n)_{n \in \N}$ be two sequences of operators in $\mathscr{M}$ whose operator-norms are uniformly bounded above, that is, there exists an $\alpha > 0$ such that $\|A_n \| \le \alpha, \|B_n\| \le \alpha$ for all $n \in \N$. Let $A := \sum_{j=1}^{\infty} E_j A_j E_j$, $B := \sum_{j=1}^{\infty} E_j B_j E_j$. Then $$AB = \sum_{j=1}^{\infty} E_j (A_j E_j B_j)E_j.$$}
\end{prop}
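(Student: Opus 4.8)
The plan is to reduce the identity to the joint strong-operator continuity of multiplication on norm-bounded sets, after first establishing that the relevant partial sums are uniformly norm-bounded. Write $A^{(k)} := \sum_{j=1}^{k} E_j A_j E_j$ and $B^{(k)} := \sum_{j=1}^{k} E_j B_j E_j$, so that by Lemma \ref{lem:inf_div1} we have $A^{(k)} \to A$ and $B^{(k)} \to B$ in the strong-operator topology. My first step would be to record that these partial sums are uniformly bounded in norm: since the vectors $E_j A_j E_j x$ lie in the mutually orthogonal ranges of the $E_j$, for any $x \in \mathscr{H}$ we have $\|A^{(k)} x\|^2 = \sum_{j=1}^{k} \|E_j A_j E_j x\|^2 \le \alpha^2 \sum_{j=1}^{k} \|E_j x\|^2 \le \alpha^2 \|x\|^2$, whence $\|A^{(k)}\| \le \alpha$ for every $k$ (and likewise $\|B^{(k)}\| \le \alpha$).

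Next I would compute the finite products explicitly. Because the $E_j$ are mutually orthogonal, $E_j E_l = \delta_{jl} E_j$, so every cross term in the double sum collapses and
$$A^{(k)} B^{(k)} = \sum_{j=1}^{k} \sum_{l=1}^{k} E_j A_j E_j E_l B_l E_l = \sum_{j=1}^{k} E_j (A_j E_j B_j) E_j.$$
Setting $C_j := A_j E_j B_j$, we have $\|C_j\| \le \alpha^2$ for all $j$, so Lemma \ref{lem:inf_div1} applies to the sequence $(C_j)_{j \in \N}$ and ensures that $\sum_{j=1}^{k} E_j C_j E_j$ converges strongly to the operator $\sum_{j=1}^{\infty} E_j (A_j E_j B_j) E_j \in \mathscr{M}$, which is precisely the right-hand side of the claimed identity.

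It remains to identify the left-hand side as the same strong-operator limit. Here I would invoke that multiplication is jointly strong-operator continuous on norm-bounded sets: for $x \in \mathscr{H}$, write $A^{(k)} B^{(k)} x - ABx = A^{(k)}\big(B^{(k)} x - Bx\big) + \big(A^{(k)} - A\big)(Bx)$. The first summand is dominated by $\alpha \, \|B^{(k)} x - Bx\| \to 0$ thanks to the uniform bound $\|A^{(k)}\| \le \alpha$, and the second tends to $0$ since $A^{(k)} \to A$ strongly, evaluated on the fixed vector $Bx$. Hence $A^{(k)} B^{(k)} \to AB$ strongly, and uniqueness of strong-operator limits forces $AB = \sum_{j=1}^{\infty} E_j (A_j E_j B_j) E_j$.

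I do not anticipate a serious obstacle; the one genuinely delicate point is that the strong-operator topology does not render multiplication jointly continuous in general, so the uniform norm bound $\|A^{(k)}\| \le \alpha$ from the first step is exactly what is needed to pass the limit through the product. The orthogonality of the $E_j$ performs all the algebraic work, reducing the double sum to a single diagonal sum.
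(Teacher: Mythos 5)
Your proof is correct, but it follows a genuinely different route from the paper's. The paper never forms the product of the two partial sums: it uses only \emph{separate} strong-operator continuity of multiplication. For each fixed $k$, left multiplication by $E_k A_k E_k$ is strong-operator continuous, which collapses $E_k A_k E_k B$ to $E_k A_k E_k B_k E_k$; then, since $A^{(k)} \to A$ strongly and $A^{(k)}Bx = A^{(k)}(Bx)$ is just evaluation on the fixed vector $Bx$, one gets $AB = \sum_{j=1}^{\infty} E_j A_j E_j B = \sum_{j=1}^{\infty} E_j (A_j E_j B_j) E_j$ directly, with no norm estimate on the partial sums ever required. You instead compute $A^{(k)}B^{(k)}$ in closed form and pass to the limit via \emph{joint} strong-operator continuity of multiplication on norm-bounded sets, which obliges you to first establish the uniform bound $\|A^{(k)}\| \le \alpha$; your Pythagoras argument for this is exactly right, and is genuinely needed, since the naive triangle-inequality bound $k\alpha$ would be useless. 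The trade-off: your route costs one extra estimate, but it makes explicit a fact the paper leaves unstated (the bounds $\|A^{(k)}\| \le \alpha$ and hence $\|A\| \le \alpha$, which are of independent use) and it isolates precisely where boundedness enters, via the splitting $A^{(k)}B^{(k)}x - ABx = A^{(k)}\bigl(B^{(k)}x - Bx\bigr) + \bigl(A^{(k)} - A\bigr)(Bx)$. Both arguments rest on the same algebraic collapse $E_j E_l = \delta_{jl} E_j$; the difference lies solely in which continuity principle carries the limit through the product.
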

\begin{proof}
Clearly $\|A_k E_k B_k \| \le \alpha ^2$ for all $k \in \N$ so that by Lemma \ref{lem:inf_div1}, $\sum_{j=1}^{\infty} E_j (A_j E_j B_j)E_j$ may be interpreted as the strong-operator limit of the partial sums of the series.

Note that multiplication ($(S, T) \in \mathscr{M} \times \mathscr{M} \mapsto ST \in \mathscr{M}$) is separately continuous in the strong-operator topology. Thus for every $k \in \N$, we have $$E_k A_k E_k B = \sum_{j=1}^{\infty} (E_k A_k E_k) (E_j B_j E_j) = E_k A_k E_k B_k E_k,$$ and 
$$AB = \sum_{j=1}^{\infty} E_j A_j E_j B = \sum_{j=1}^{\infty} E_j (A_j E_j B_j) E_j.$$
\end{proof}

\begin{cor}
\label{cor:diag_uni_sa}
\textsl{
Let $\mathscr{M}$ be a von Neumann algebra acting on the Hilbert $\mathscr{H}$ and $( E_n )_{n \in \N}$ be a sequence of mutually orthogonal projections in $\mathscr{M}$ such that $\sum_{j=1}^{\infty} E_j = I$.
\begin{itemize}
    \item[(i)] Let $(U_n)_{n \in \N}$ be a sequence of operators in $\mathscr{M}$ such that $U_n E_n = E_n U_n = U_n$, and $U_n ^* U_n = U_n U_n ^* = E_n$; In other words, $U_n$ is a unitary operator in the von Neumann algebra $E_n \mathscr{M} E_n$ acting on $E_n(\mathscr{H})$. Then $\sum_{j=1}^{\infty} E_j U_j E_j$ is a unitary operator in $\mathscr{M}$;
    \item[(ii)] Let $(H_n)_{n \in \N}$ be a sequence of self-adjoint operators in $\mathscr{M}$ whose operator-norms are uniformly bounded above, that is, there exists an $\alpha > 0$ such that $\|H_n \| \le \alpha$ for all $n \in \N$. Then $\sum_{j=1}^{\infty} E_j H_j E_j$ is a self-adjoint operator in $\mathscr{M}$.
\end{itemize}
}
\end{cor}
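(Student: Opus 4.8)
The plan is to derive both parts directly from Lemma \ref{lem:inf_div1} and Proposition \ref{prop:diag_prod}, the only genuinely new ingredient being that the adjoint operation distributes over these diagonal sums. For part (i), each $U_n$ is a unitary in the corner algebra $E_n \mathscr{M} E_n$, so $\|U_n\| = 1$ for all $n$; for part (ii) the uniform bound $\|H_n\| \le \alpha$ is given by hypothesis. In either case Lemma \ref{lem:inf_div1} guarantees that the series converges in the strong-operator topology to an operator $U := \sum_{j=1}^{\infty} E_j U_j E_j$ (respectively $H := \sum_{j=1}^{\infty} E_j H_j E_j$) lying in $\mathscr{M}$, so existence and membership in $\mathscr{M}$ are free.

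First I would record the elementary fact that $\big( \sum_{j=1}^{\infty} E_j A_j E_j \big)^* = \sum_{j=1}^{\infty} E_j A_j^* E_j$ for any uniformly bounded sequence $(A_n)_{n \in \N}$ in $\mathscr{M}$. The partial sums of the left-hand series converge in the strong-operator topology, hence in the weak-operator topology, and since the adjoint map is weak-operator continuous, their adjoints $\sum_{j=1}^{k} E_j A_j^* E_j$ converge in the weak-operator topology to the left-hand side. But Lemma \ref{lem:inf_div1} also shows these same partial sums converge strong-operator (hence weak-operator) to $\sum_{j=1}^{\infty} E_j A_j^* E_j$, so the two limits agree by uniqueness of weak-operator limits.

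With the adjoint formula in hand, part (i) reduces to a short computation using Proposition \ref{prop:diag_prod}. Taking $A_j = U_j^*$ and $B_j = U_j$ gives $U^* U = \sum_{j=1}^{\infty} E_j (U_j^* E_j U_j) E_j$, and since $E_j U_j = U_j$ forces $U_j^* E_j U_j = U_j^* U_j = E_j$, each summand collapses to $E_j$, whence $U^* U = \sum_{j=1}^{\infty} E_j = I$. The symmetric choice $A_j = U_j$, $B_j = U_j^*$ yields $U U^* = I$ in exactly the same way, so $U$ is unitary. Part (ii) is then immediate: as each $H_j = H_j^*$, the adjoint formula gives $H^* = \sum_{j=1}^{\infty} E_j H_j^* E_j = \sum_{j=1}^{\infty} E_j H_j E_j = H$.

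I expect no serious obstacle here, since the corollary is essentially bookkeeping layered on top of the preceding lemma and proposition. The one point that requires genuine care — and the only place where one must resist the temptation to "take adjoints termwise" without comment — is the adjoint formula of the second paragraph, where the distinction between strong- and weak-operator convergence is essential, because the adjoint is weak-operator but in general not strong-operator continuous.
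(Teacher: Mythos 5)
Your proof is correct and follows essentially the same route as the paper's: Lemma \ref{lem:inf_div1} for existence, Proposition \ref{prop:diag_prod} for the product computation, and the collapse $U_j^* E_j U_j = E_j$ (the paper's part (ii) likewise rests on the fact that strong-operator limits of self-adjoint operators are self-adjoint, which is your adjoint formula in disguise). If anything, your explicit verification that $\bigl(\sum_{j} E_j A_j E_j\bigr)^* = \sum_{j} E_j A_j^* E_j$ via weak-operator continuity of the adjoint supplies a detail the paper glosses over: the paper defines $V := \sum_{j} E_j U_j^* E_j$, shows $UV = VU = I$, and then concludes that $U$ is unitary with $V = U^*$, a step that strictly speaking requires precisely the identification you prove.
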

\begin{proof} 
(i) From Lemma \ref{lem:inf_div1}, we observe that $U := \sum_{j=1}^{\infty} E_j U_j E_j, V := \sum_{j=1}^{\infty} E_j U_j ^* E_j$ are well-defined. From Proposition \ref{prop:diag_prod} (and the hypothesis), we conclude that $$UV = \sum_{j=1}^{\infty} E_j (U_j E_j U_j^*) E_j = \sum_{j=1}^{\infty} E_j (E_j U_j U_j^*) E_j = \sum_{j=1}^{\infty} E_j = I.$$
Similarly $VU = I$. Thus $U$ is a unitary operator in $\mathscr{M}$ and $V = U^*$.
\vskip 0.1in

\noindent (ii) This follows from the fact that the limit of self-adjoint operators, in the strong-operator topology, is self-adjoint.
\end{proof}

\begin{remark}
\label{rmrk:symprod_vNa}
Let $\mathscr{M}$ be a von Neumann algebra with identity $I$ and let $U_1$ be a unitary operator in $\mathscr{M}$. We observe that the unitary $$
\begin{bmatrix}
U_1 & 0\\
0 & U_1 ^*
\end{bmatrix} =
\begin{bmatrix}
0 & I\\
I & 0
\end{bmatrix}
\begin{bmatrix}
0 & U_1 ^*\\
U_1 & 0
\end{bmatrix}
$$ in $M_2(\mathscr{M})$ is a product of two symmetries in $M_2(\mathscr{M})$. 

\end{remark}

\begin{lem}
\label{lem:II1_masa_dec}
\textsl{
Let $\mathscr{R}$ be a type $II_1$ von Neumann algebra, and $U$ be a unitary in $\mathscr{R}$. Then for every $n \in \N$ there are projections $E_1, E_2, \ldots, E_n$ in $\mathscr{R}$ commuting with $U$ such that $E_1 \sim E_2 \sim \cdots \sim E_n$ and $E_1 + E_2 + \cdots + E_n = I$.}
\end{lem}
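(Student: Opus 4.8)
The plan is to locate all $n$ projections inside the relative commutant $\mathscr{S} := \{U\}' \cap \mathscr{R}$, since a projection commutes with $U$ precisely when it lies in $\mathscr{S}$. Note that $\mathscr{S}$ is a von Neumann subalgebra of the finite algebra $\mathscr{R}$, hence finite, and it contains both $U$ and the center $\mathscr{C}$ of $\mathscr{R}$, with $\mathscr{C}$ sitting inside the center of $\mathscr{S}$. The center-valued trace $\tau$ of $\mathscr{R}$ restricts to a normal, tracial, $\mathscr{C}$-linear map on $\mathscr{S}$, and the comparison theory of the finite algebra $\mathscr{R}$ tells us that two projections in $\mathscr{R}$ are Murray--von Neumann equivalent if and only if they have the same value under $\dim_\tau$. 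Thus it suffices to produce mutually orthogonal projections $E_1, \ldots, E_n \in \mathscr{S}$ with $\sum_j E_j = I$ and $\dim_\tau(E_j) = \frac1n I$ for each $j$; the equivalences $E_1 \sim \cdots \sim E_n$ in $\mathscr{R}$ then follow automatically.

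The key structural input is that $\mathscr{S}$ has no \emph{relative atoms over $\mathscr{C}$}: there is no nonzero projection $p \in \mathscr{S}$ with $p\mathscr{S}p = \mathscr{C}p$ (in particular, $\mathscr{S}$ has no minimal projections). First I would record the standard reduction identity $p\mathscr{S}p = \{Up\}' \cap p\mathscr{R}p$, valid because $p$ commutes with $U$. If $p$ were a relative atom, then $W^*(Up)$, being abelian and contained in its own relative commutant, would lie in $p\mathscr{S}p = \mathscr{C}p \subseteq Z(p\mathscr{R}p)$; hence $Up$ would be central in the corner $p\mathscr{R}p$, forcing $\{Up\}' \cap p\mathscr{R}p = p\mathscr{R}p$. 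Combined with $p\mathscr{S}p = \mathscr{C}p$ this gives $p\mathscr{R}p = \mathscr{C}p$, i.e.\ the corner $p\mathscr{R}p$ is abelian, contradicting that $\mathscr{R}$ (and hence every nonzero corner) is of type $II_1$.

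With this in hand, I would choose a maximal abelian subalgebra $\mathscr{A}$ of $\mathscr{S}$; it contains $\mathscr{C}$ and, by cutting down, inherits the absence of relative atoms over $\mathscr{C}$, so it is diffuse over the center. Restricting $\tau$ to $\mathscr{A}$ gives a normal conditional expectation $\mathscr{A} \to \mathscr{C}$, which under identifications $\mathscr{A} \cong L^\infty(\Omega)$, $\mathscr{C} \cong L^\infty(Z)$ becomes integration against a disintegration $\{\nu_z\}$ of the fibers, with $\nu_z$ non-atomic for almost every $z$. The crux is then a Lyapunov/intermediate-value step: because each $\nu_z$ is non-atomic, one can realize any prescribed central value $0 \le c \le \tau(I)$ as $\tau(E)$ for a projection $E \in \mathscr{A}$; applying this successively to the shrinking projections $I,\ I - E_1,\ I - E_1 - E_2,\ \ldots$, one peels off $E_1, \ldots, E_{n-1}$ with $\tau(E_j) = \frac1n I$ and sets $E_n := I - \sum_{j<n} E_j$. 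Each $E_j$ lies in $\mathscr{A} \subseteq \mathscr{S}$, so commutes with $U$; the $E_j$ sum to $I$; and they share the common value $\frac1n I$ under $\dim_\tau$, so the comparison theorem finishes the proof.

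I expect the main obstacle to be this last realization step — producing projections with exactly the prescribed central value $\frac1n I$ inside $\mathscr{A}$. The subtlety is that $\mathscr{S}$ need \emph{not} be of type $II_1$ (it can genuinely be of type $I_m$, for instance when $U$ generates a maximal abelian subalgebra of a matrix amplification), so one cannot split $I$ into equivalent pieces \emph{within} $\mathscr{S}$ itself; the equivalence must be read off from the ambient trace of $\mathscr{R}$. Making the Lyapunov step rigorous in the center-valued setting is where the real work lies, and I would either route it through the direct-integral decomposition over $\mathscr{C}$ (reducing to the classical fact that a non-atomic probability space has measurable subsets of every intermediate measure, together with a measurable selection) or through a direct exhaustion argument over subprojections using normality of $\tau$ and the diffuseness of $\mathscr{A}$.
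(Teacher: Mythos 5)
Your soft reductions are all correct, and your setting in fact coincides with the paper's: since $U \in Z(\mathscr{S})$ for $\mathscr{S} := \{U\}' \cap \mathscr{R}$, a maximal abelian subalgebra of $\mathscr{S}$ is exactly the same thing as a maximal abelian self-adjoint subalgebra of $\mathscr{R}$ containing $U$, which is where the paper works. Your argument that $\mathscr{S}$ (and hence, after cutting down, any masa $\mathscr{A}$ of $\mathscr{S}$) has no relative atoms over $\mathscr{C}$ is correct, as is the reduction of $E_1 \sim \cdots \sim E_n$ to $\dim_{\tau}(E_j) = \frac{1}{n}I$ via the comparison theory of finite von Neumann algebras.

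The gap is the step you yourself flag as ``where the real work lies'': producing a projection in $\mathscr{A}$ with prescribed center-valued trace $\frac{1}{n}I$. That step \emph{is} the content of the lemma --- everything before it is routine --- and neither of your proposed routes establishes it. The direct-integral route fails in the stated generality: the lemma concerns arbitrary type $II_1$ von Neumann algebras, with no separability hypothesis, and disintegration over $\mathscr{C}$ together with the measurable-selection argument you invoke requires a separably acting algebra (separable predual). The ``direct exhaustion'' route is the right idea, but as written it is a one-phrase placeholder for a genuinely nontrivial maximality argument in the center-valued setting; carried out in full, it amounts to reproving Kadison's results. Indeed, the paper's proof is two lines: take a masa $\mathscr{A}$ of $\mathscr{R}$ containing $U$ and invoke \cite[Corollary 3.15]{diag_kadison}, which states precisely that every masa in a type $II_1$ von Neumann algebra contains $n$ orthogonal equivalent projections with sum $I$; the companion results \cite[Proposition 3.13, Corollary 3.14]{diag_kadison} are what the paper later uses, in the proof of Theorem \ref{thm:mainthm2}, to extract projections of prescribed central dimension from a masa. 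So your proposal correctly reduces the lemma to this known statement but does not prove it; to close the gap you should either cite Kadison's result --- at which point the relative-atom analysis becomes unnecessary --- or actually carry out the transfinite exhaustion argument in the center-valued setting.
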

\begin{proof}
By \cite[Corollary 3.15]{diag_kadison}, for every positive integer $n$, each maximal abelian self-adjoint subalgebra of $\mathscr{R}$ contains $n$ orthogonal equivalent projections with sum $I$. Considering a maximal abelian self-adjoint subalgebra $\mathscr{A}$ of $\mathscr{R}$ containing $U$, the assertion follows immediately with projections $E_1, E_2, \ldots E_n \in \mathscr{A}$. 
\end{proof}

\begin{remark}
\label{rmrk:mat_alg_vNa}
Let $\mathscr{R}$ be a type $II_1$ von Neumann algebra acting on the Hilbert space $\mathscr{H}$. By \cite[Corollary 3.15]{diag_kadison}, for every $n \in \N$, there are equivalent mutually orthogonal projections $E_1, E_2, \ldots, E_n$ partitioning the identity, that is, $E_1 + E_2 + \cdots + E_n = I$. There is a system of matrix units $\{ V_{j,k} \}_{1 \le j, k \le n}$ where $V_{j,k}$ is a partial isometry in $\mathscr{M}$ with initial projection $E_k$ and final projection $E_j$ so that:
\begin{enumerate}
    \item $V_{j,j} = E_j$ for $1 \le j \le n$;
    \item $V_{j,k}^* = V_{k, j}$ for $1 \le j, k \le n$;
    \item $V_{j, k} V_{\ell, m} = \delta _{k, \ell} V_{j, m}$ for $1 \le j, k, \ell, m \le n$.
\end{enumerate}
Note that the mapping $\psi : \mathscr{R} \to M_n(E_1 \mathscr{R} E_1)$ given by $\psi(A) = (V_{1,j} A V_{k,1})_{1\le j, k \le n}$ is a $*$-isomorphism of von Neumann algebras. With this $*$-isomorphism in hand, we will take the liberty of viewing $\mathscr{R}$ as a matrix algebra over a type $II_1$ von Neumann algebra ($E_1 \mathscr{R} E_1$ acting on the Hilbert space $E_1(\mathscr{H})$, in this case) which helps avail various standard algebraic techniques involving matrices. Furthermore, as noted in Lemma \ref{lem:II1_masa_dec}, for a given unitary $U$ in $\mathscr{R}$, using Kadison's diagonalization theorem we may assume that $U$ is in diagonal form in this matrix representation.
\end{remark}

In the study of the decomposition of a unitary as the product of symmetries, the basic ingredient in previous investigations (see \cite{broise}, \cite{radjavi}, \cite{dowerk_thom}, and Theorem \ref{thm:mainthm1} above) is the identification of an appropriate ``basis" which ``diagonalizes" the unitary under consideration. This allows for the usage of appropriate matrix-algebraic techniques. Lemma \ref{lem:four_sym_eps} below sets the stage for such a (infinitary) ``diagonalization" process in the context of type $II_1$ von Neumann algebras to facilitate the use of the matrix decomposition mentioned in Remark \ref{rmrk:mat_alg_vNa}.

\begin{lem}
\label{lem:two_uni_prod_sym}
\textsl{
Let $\mathscr{R}$ be a type $II_1$ von Neumann algebra acting on the Hilbert space $\mathscr{H}$ and $\tau$ be the canonical center-valued trace on $\mathscr{R}$. Let $U, V$ be unitaries in $\mathscr{R}$. Then there is a projection $E \in \mathscr{R}$ with $\dim _{\tau} (E) = \frac{1}{2} I$, a unitary $V'$ in the von Neumann algebra $E\mathscr{R}E$ (acting on the Hilbert space $E(\mathscr{H})$), and symmetries $R_1, R_2 \in \mathscr{R}$ such that $$UV = R_1 R_2 U (V' E + I-E).$$
}
\end{lem}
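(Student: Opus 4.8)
The plan is to reduce first to the case $U=I$, and then to a statement about completing a ``half'' isometry to a product of two symmetries. Suppose we can show that for every unitary $V\in\mathscr R$ there are a projection $E$ with $\dim_\tau(E)=\tfrac12 I$, a unitary $V'\in E\mathscr R E$, and symmetries $S_1,S_2\in\mathscr R$ with $V=S_1S_2(V'E+I-E)$. Then for general $U$ we conjugate: setting $R_i:=US_iU^*$ (again symmetries),
\[
UV=US_1S_2(V'E+I-E)=(US_1U^*)(US_2U^*)\,U(V'E+I-E)=R_1R_2\,U(V'E+I-E),
\]
which is exactly the desired form. So I may assume $U=I$ throughout.

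\emph{Reformulation as a completion problem.} Using Lemma \ref{lem:II1_masa_dec} with $n=2$, fix equivalent orthogonal projections $E_1\sim E_2$ with $E_1+E_2=I$, so that $\dim_\tau(E_1)=\dim_\tau(E_2)=\tfrac12 I$. Put $E:=E_1$ and, via Remark \ref{rmrk:mat_alg_vNa}, view $\mathscr R\cong M_2(E_1\mathscr R E_1)$, writing $V=\begin{bmatrix}a&b\\ c&d\end{bmatrix}$. The product $V(V'^*E+I-E)=V\,\mathrm{diag}(V'^*,I)$ has the \emph{same} second column $\begin{bmatrix}b\\ d\end{bmatrix}$ as $V$, while its first column is $\mathrm{col}_1(V)\,V'^*$. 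The key observation is that, as $V'$ ranges over the unitaries of $E_1\mathscr R E_1$, the operator $M:=V\,\mathrm{diag}(V'^*,I)$ ranges over \emph{all} unitary completions of the fixed partial isometry $w:=VE_2$ (which has initial projection $E_2$ and final projection $G:=VE_2V^*$, both of trace $\tfrac12 I$). If some completion $M$ lies in $\mathcal S(\mathscr R)^2$, say $M=S_1S_2$, then $V=M\,\mathrm{diag}(V',I)=S_1S_2(V'E+I-E)$, as required. Thus it suffices to prove the following: $(\star)$ \emph{the half-isometry $w$ extends to a product of two symmetries in $\mathscr R$.}

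\emph{Proof of $(\star)$ and the main obstacle.} Here I would use the characterization of $\mathcal S(\mathscr R)^2$ from Proposition \ref{prop:prod-two-symm}: a completion $M$ lies in $\mathcal S(\mathscr R)^2$ exactly when $M$ is unitarily equivalent to $M^*$. The guiding model is $\mathscr R=M_2(\C)$: there the second column of $V$ can be completed to a matrix in $SU(2)$, whose eigenvalues form a conjugate pair $\lambda,\bar\lambda$, so that it is automatically conjugate to its adjoint and hence a product of two symmetries (this is the diagonal form $\mathrm{diag}(U_1,U_1^*)$ of Remark \ref{rmrk:symprod_vNa}). To carry this out in $\mathscr R$, I would analyze the pair of projections $E_2$ and $G=VE_2V^*$ through the two--projection (operator--angle) decomposition: on the part where $E_2$ and $G$ commute the extension of $w$ is built directly, and on the generic part the operator angle reduces matters fibrewise to the $2\times2$ scalar completion above, producing a completion $M$ whose spectral distribution is symmetric under $\lambda\mapsto\bar\lambda$; the pieces are then reassembled into a single unitary in $\mathscr R$ using Corollary \ref{cor:diag_uni_sa}. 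I expect the main obstacle to be precisely this noncommutative completion step — showing that the freedom in the first column (equivalently, in $V'$) is enough to \emph{symmetrize the spectrum} of $M$ — since the naive scalar recipe for the orthogonal complement of $\begin{bmatrix}b\\ d\end{bmatrix}$ breaks down when $b$ and $d$ fail to commute, and a single ``simple'' symmetry cannot be used because the diagonal corners of a symmetry are forced to be self-adjoint. Once $(\star)$ is established, the two reductions above yield the decomposition $UV=R_1R_2\,U(V'E+I-E)$, which is the basic halving step underlying the infinitary ``diagonalization'' of Lemma \ref{lem:four_sym_eps}.
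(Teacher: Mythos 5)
Your reduction to $U=I$ and your reformulation as a completion problem are both correct: every unitary completion of $w=VE_2$ is indeed of the form $V(V'^*E_1+E_2)$, and exhibiting one completion lying in $\mathcal{S}(\mathscr{R})^2$ would finish the proof. The genuine gap is in your treatment of $(\star)$, and it is not just a missing detail: the criterion you propose to verify --- that the completion $M$ can be arranged to have spectral distribution symmetric under $\lambda\mapsto\bar\lambda$ --- does not imply $M\in\mathcal{S}(\mathscr{R})^2$. Proposition \ref{prop:prod-two-symm} requires a unitary $W\in\mathscr{R}$ with $W^*MW=M^*$, and in a type $II_1$ von Neumann algebra unitary equivalence is not determined by distribution data. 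Concretely, take a $II_1$ factor containing a singular maximal abelian subalgebra $\mathscr{A}$ (e.g., the generator MASA in a free group factor); being diffuse, $\mathscr{A}$ is generated by a Haar unitary $M$, whose distribution (Haar measure on $S^1$) is conjugation-symmetric. Yet $M$ is not unitarily equivalent to $M^*$: since $M$ and $M^*$ generate the same algebra $\mathscr{A}$, any $W$ with $W^*MW=M^*$ normalizes $\mathscr{A}$, hence lies in $\mathscr{A}$ by singularity, and then $M^*=W^*MW=M$, a contradiction. So ``symmetrizing the spectrum'' of a completion cannot by itself establish $(\star)$; one must control how $M$ sits inside $\mathscr{R}$, which is exactly the difficulty your sketch defers. (Note also that $(\star)$ for an arbitrary, pre-chosen $E_2$ is strictly stronger than the lemma, which allows $E$ to depend on $V$.)

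The repair is to use the freedom in choosing $E_1,E_2$, and this is what the paper does. Lemma \ref{lem:II1_masa_dec} --- which you invoke, but only to get two complementary equivalent projections --- in fact gives, when applied to the unitary $V$ with $n=2$, projections $E_1\sim E_2$ with $E_1+E_2=I$ that \emph{commute with} $V$; equivalently, by Kadison's diagonalization theorem one writes $V=W\,\mathrm{diag}(V_1,V_2)\,W^*$ in $M_2(\mathscr{S})$. With this adapted choice your matrix for $V$ is already diagonal ($b=c=0$) and the completion problem trivializes: the factorization $\mathrm{diag}(V_1,V_2)=\mathrm{diag}(V_1,V_1^*)\cdot\mathrm{diag}(I,V_1V_2)$ exhibits the completion $M=W\,\mathrm{diag}(V_1,V_1^*)\,W^*$, which is a product of two symmetries directly by Remark \ref{rmrk:symprod_vNa} (no appeal to Proposition \ref{prop:prod-two-symm} or to spectral symmetry is needed), while the remaining factor $W\,\mathrm{diag}(I,V_1V_2)\,W^*$ is precisely of the form $V'E+I-E$ with $E=W\,\mathrm{diag}(0,I)\,W^*$ of center-valued dimension $\tfrac12 I$. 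Conjugating the two symmetries by $U$ as in your first paragraph then yields $UV=R_1R_2\,U(V'E+I-E)$. In short: keep the commutativity that Lemma \ref{lem:II1_masa_dec} hands you, and the hard completion step you correctly identified as the main obstacle disappears.
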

\begin{proof}
Let $\mathscr{S}$ be a type $II_1$ von Neumann algebra such that $\mathscr{R} \cong M_2(\mathscr{S})$. By Kadison's diagonalization theorem, there are unitaries $V_1, V_2 \in \mathscr{S}$ and a unitary $W \in M_2(\mathscr{S})$ such that $V= W \mathrm{diag}(V_1, V_2) W^*$. Note that the projection $E := W \mathrm{diag}(0, I) W^* \in M_2(\mathscr{S})$ and the unitary $V' := W \mathrm{diag}(I, V_1 V_2) W^*$ in $M_2(\mathscr{S})$ commute. Furthermore, 
\begin{align*}
UV &= U \big( W \mathrm{diag}(V_1, V_2) W^* \big)\\
&= \big( U W \mathrm{diag}(V_1, V_1 ^*) W^* U^* \big) U \big( W \mathrm{diag}(I, V_1 V_2) W^* \big).\\
&= \big( U W \mathrm{diag}(V_1, V_1 ^*) W^* U^* \big) U (V'E + I-E).
\end{align*}
From Remark \ref{rmrk:symprod_vNa}, it is straightforward to see that $U W \mathrm{diag}(V_1, V_1 ^*) W^* U^*$ is the product of two symmetries in $M_2(\mathscr{S})$.  By appropriately interpreting these operators and our computations in the context of $\mathscr{R}$, we see that $\dim _{\tau} (E) = \frac{1}{2}I$ and the required conditions in the assertion are satisfied.
\end{proof}

\begin{lem}
\label{lem:four_sym_eps}
\textsl{
Let $\mathscr{R}$ be a type $II_1$ von Neumann algebra acting on the Hilbert space $\mathscr{H}$ and $\tau$ be the canonical center-valued trace on $\mathscr{R}$. Let $E_1 \in \mathscr{R}$ be a projection with $\dim _{\tau} (E_1) = \frac{2}{3} I$, and $B_1$ be a unitary operator in the von Neumann algebra $E_1 \mathscr{R} E_1$ (acting on the Hilbert space $E_1(\mathscr{H})$). Let $U$ be a unitary operator $\mathscr{R}$ commuting with $E_1$. Then there exist four symmetries $R_1, R_2, R_3, R_4 \in \mathscr{R}$, a projection $E_2 \in \mathscr{R}$ orthogonal to $E_1$ with $\dim _{\tau} (E_2) = \frac{1}{6} I$, and a unitary operator $B_2$ in the von Neumann algebra $E_2 \mathscr{R} E_2$ (acting on the Hilbert space $E_2(\mathscr{H})$) such that $$U = R_1 R_2 R_3 R_4 \big(B_1  E_1  + B_2 E_2 +  (I - E_1 - E_2) \big).$$
}
\end{lem}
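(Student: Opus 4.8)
The plan is to realize $\mathscr{R}$ as a $6\times 6$ matrix algebra adapted to $E_1$, reduce the assertion to a statement about a single diagonal unitary with one free diagonal entry, and then factor that diagonal unitary as a product of four symmetries by a telescoping (shifted-pairing) argument. Concretely, using Corollary 3.15 of \cite{diag_kadison} (cf. Remark \ref{rmrk:mat_alg_vNa}) I would split $E_1$ into four equivalent projections $P_1,P_2,P_3,P_4$ and $I-E_1$ into two equivalent projections $P_5,P_6$, each of dimension $\tfrac16 I$; this is possible exactly because $\dim_\tau(E_1)=\tfrac23 I$ and $\dim_\tau(I-E_1)=\tfrac13 I$. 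Fixing matrix units, identify $\mathscr{R}\cong M_6(\mathscr{B})$ with $\mathscr{B}=P_1\mathscr{R}P_1$ of type $II_1$. Setting $B:=B_1E_1+(I-E_1)$, $U_0:=UB^{-1}$ (which commutes with $E_1$), $E_2:=P_5$, and $C:=B_2E_2+(I-E_2)$, one checks $CB=B_1E_1+B_2E_2+(I-E_1-E_2)$, so the desired identity is equivalent to producing a unitary $B_2\in E_2\mathscr{R}E_2$ for which $U_0C^{-1}$ is a product of four symmetries, after which $U=R_1R_2R_3R_4\,CB$.

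Next I would diagonalize. Conjugating the entire identity to be proved by a unitary of the form $Z\oplus Z'$ with $Z$ acting on $E_1$ and $Z'$ on $I-E_1$ preserves $E_1$, sends symmetries to symmetries, and transforms $E_2,B_2$ into operators of the same type; hence by Kadison's diagonalization theorem I may assume $U_0=\mathrm{diag}(\mu_1,\mu_2,\mu_3,\mu_4,\nu_1,\nu_2)$ with each entry a unitary in $\mathscr{B}$. Then $U_0C^{-1}=\mathrm{diag}(\mu_1,\mu_2,\mu_3,\mu_4,\nu_1B_2^*,\nu_2)$, so the fifth diagonal entry $\rho:=\nu_1B_2^*$ is an arbitrary unitary of our choosing while the remaining five are prescribed.

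The heart of the argument is to factor $\mathrm{diag}(\mu_1,\mu_2,\mu_3,\mu_4,\rho,\nu_2)$ as $VW$, where $V$ is assembled from conjugate-pair blocks on the pairing $(1,2),(3,4),(5,6)$ and $W$ from the \emph{shifted} pairing $(2,3),(4,5),(6,1)$, namely $V:=\mathrm{diag}(a_1,a_1^*,a_2,a_2^*,a_3,a_3^*)$ and $W:=\mathrm{diag}(b_3^*,b_1,b_1^*,b_2,b_2^*,b_3)$. Since all six $P_i$ are mutually equivalent, each block $\mathrm{diag}(x,x^*)$ over a used pair is a product of two symmetries by Remark \ref{rmrk:symprod_vNa}, and as each pairing partitions the six blocks, both $V$ and $W$ are products of two symmetries; thus $VW$ is a product of four symmetries. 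Matching $VW$ entrywise gives six equations; taking $a_1=I$ solves them explicitly, with $b_3=\mu_1^*$, $b_1=\mu_2$, $a_2=\mu_3\mu_2$, $b_2=\mu_3\mu_2\mu_4$, $a_3=\mu_1^*\nu_2^*$, while the fifth equation \emph{defines} the free entry as $\rho=\mu_1^*\nu_2^*\mu_4^*\mu_2^*\mu_3^*$. Choosing $B_2:=\rho^*\nu_1$ realizes this value, so $U_0C^{-1}=VW=R_1R_2R_3R_4$, and conjugating back yields $U=R_1R_2R_3R_4\big(B_1E_1+B_2E_2+(I-E_1-E_2)\big)$.

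The main obstacle is precisely the noncommutativity: the Halmos--Kakutani/Radjavi factorization used in Theorem \ref{thm:mainthm1} relies on the diagonal entries commuting (they were central there), which fails for the $\mathscr{B}$-valued entries here, so the naive analogue of $V,W$ does not telescope to the target. The device that rescues the argument is to use two \emph{different} pairings for $V$ and $W$, forcing the product to telescope through all six blocks; the single free entry $\rho$, carried by the extra projection $E_2$ of dimension $\tfrac16 I$ lying outside $E_1$, then absorbs the leftover word, playing the role that the condition $\det_c(U)=\pm1$ played in the type $I_n$ case. In writing this up I would verify carefully that the shifted pairing $(2,3),(4,5),(6,1)$ genuinely partitions the six blocks (so the two-symmetry decompositions on disjoint pairs combine into a single pair of symmetries for $W$) and that the $E_1$-preserving conjugation legitimately reduces to the diagonal case.
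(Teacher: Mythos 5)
Your proof is correct, and while it shares the paper's underlying mechanism, it implements it through a genuinely different decomposition. Common to both: the reduction to $B_1=E_1$ (your $U_0=UB^{-1}$ is exactly the paper's opening move), Kadison diagonalization performed compatibly with $E_1$, conjugate-pair blocks $\mathrm{diag}(x,x^*)$ as products of two symmetries (Remark \ref{rmrk:symprod_vNa}), staggered pairings to force telescoping, and a free unitary on a $\tfrac16$-dimensional projection that absorbs the leftover word. The difference: the paper stays in $M_3(\mathscr{S})$ and delegates the noncommutative telescoping to Lemma \ref{lem:two_uni_prod_sym}, which internally performs a second ($2\times 2$) diagonalization and produces $E_2$ as one half of the third $\tfrac13$-block, so its effective block structure is $(\tfrac13,\tfrac13,\tfrac16,\tfrac16)$ with pairings at two scales; you instead flatten to $M_6(\mathscr{B})$ at the outset and extract all four symmetries from the single identity $D=VW$ built on the two cyclic pairings $(1,2),(3,4),(5,6)$ and $(2,3),(4,5),(6,1)$, solving the six entry equations explicitly and letting the fifth entry $\rho$ be \emph{defined} by them; your formulas $b_3=\mu_1^*$, $b_1=\mu_2$, $a_2=\mu_3\mu_2$, $b_2=\mu_3\mu_2\mu_4$, $a_3=\mu_1^*\nu_2^*$, $\rho=\mu_1^*\nu_2^*\mu_4^*\mu_2^*\mu_3^*$ check out. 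Your route buys self-containment (no intermediate lemma, one diagonalization step, a transparent role for the free entry); the paper's buys modularity, since Lemma \ref{lem:two_uni_prod_sym} is stated for an arbitrary pair of unitaries and isolates the halving trick. The two points you flagged for verification do hold: both pairings partition the six blocks, so $V$ and $W$ are each products of two symmetries (finite orthogonal sums of symmetries are symmetries); and conjugation by $Z\oplus Z'$ is legitimate because it preserves $E_1$, carries symmetries to symmetries, and sends $(P_5,B_2)$ to an admissible pair — the conjugated projection stays orthogonal to $E_1$ and keeps center-valued trace $\tfrac16 I$, as $\tau$ is invariant under unitary conjugation. One small point worth making explicit in a write-up: the mutual equivalence of all six $P_i$, needed for the matrix-unit identification of Remark \ref{rmrk:mat_alg_vNa}, follows because they all have center-valued trace $\tfrac16 I$ and $\mathscr{R}$ is finite.
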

\begin{proof}
By considering the unitary operator $U(B_1 ^*E_1 + I-E_1)$ in $\mathscr{R}$ (in lieu of $U$), without loss of generality, we may assume that $B_1 = E_1$.

Let $\mathscr{S}$ be a type $II_1$ von Neumann algebra such that $\mathscr{R} \cong M_3(\mathscr{S})$. We denote the identity of $\mathscr{S}$ by $I_{\mathscr{S}}$ and the center-valued trace on $\mathscr{S}$ by $\tau_{\mathscr{S}}$. Since $U$ and $E_1$ commute, without loss of generality, we may assume that $U$ and $E_1$ are both in diagonal form (via simultaneous diagonalization using Kadison's diagonalization theorem) so that $U = \mathrm{diag}(U_1, U_2, U_3)$, for unitaries $U_1, U_2, U_3$ in $\mathscr{S}$, and $E_1 = \mathrm{diag}(I_{\mathscr{S}}, I_{\mathscr{S}}, 0)$.

Using Lemma \ref{lem:two_uni_prod_sym} in the context of $\mathscr{S}$, we have a  projection $E \in \mathscr{S}$ with $\dim _{\tau_{\mathscr{S}}} (E) = \frac{1}{2} I_{\mathscr{S}}$, a unitary $V' \in \mathscr{S}$ commuting with $E$, and symmetries $S_1, S_2 \in \mathscr{S}$ such that 
$$U_3 = (U_2 ^* U_1 ^*) (U_1 U_2 U_3) = S_1 S_2 (U_2 ^* U_1 ^*) (V'E+I-E).$$
Thus we have
\begin{align*}
\mathrm{diag}(U_1, U_2, U_3) &= \mathrm{diag}(U_1, U_1 ^*, I_{\mathscr{S}}) \cdot  \mathrm{diag}(I_{\mathscr{S}}, U_1 U_2, U_2 ^* U_1^*) \cdot \mathrm{diag}(I_{\mathscr{S}}, I_{\mathscr{S}}, U_1 U_2 U_3)\\
&= \mathrm{diag}(U_1, U_1 ^*, S_1 S_2) \cdot  \mathrm{diag}(I_{\mathscr{S}}, U_1 U_2, U_2 ^* U_1^*) \cdot \mathrm{diag}(I_{\mathscr{S}}, I_{\mathscr{S}}, V'E+I-E).
\end{align*}
From Remark \ref{rmrk:symprod_vNa}, it is straightforward to see that the matrices, $$\mathrm{diag}(U_1, U_1 ^*, S_1 S_2),\;\; \mathrm{diag}(I_{\mathscr{S}}, U_1 U_2, U_2 ^* U_1^*),$$ can each be decomposed as the product of two symmetries in $M_3(\mathscr{S})$. We choose $E_2 := \mathrm{diag}(0, 0, E)$ and $B_2 := \mathrm{diag}(0, 0, V'E)$. By appropriately interpreting these operators and our computations in the context of $\mathscr{R}$, we see that $\dim _{\tau} (E_2) = \frac{1}{6}I$ and the required conditions in the assertion are satisfied.
\end{proof}

\begin{thm}
\label{thm:mainthm2}
\textsl{
Every unitary in a type $II_1$ von Neumann algebra $\mathscr{R}$ may be decomposed as the product of six symmetries in $\mathscr{R}$.
}
\end{thm}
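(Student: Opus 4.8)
The plan is to turn the single four-symmetry reduction of Lemma \ref{lem:four_sym_eps} into an infinite ``diagonalization'' of $U$ and then use Corollary \ref{cor:diag_uni_sa} to reassemble the infinitely many resulting symmetries into just six. First I would fix a unitary $U \in \mathscr{R}$ and, using Lemma \ref{lem:II1_masa_dec} inside a maximal abelian self-adjoint subalgebra containing $U$, produce a projection $E_1$ commuting with $U$ with $\dim_{\tau}(E_1) = \tfrac{2}{3}I$. Feeding $(U, E_1, E_1)$ into Lemma \ref{lem:four_sym_eps} writes $U$ as four symmetries times a remainder unitary supported on a projection of dimension $\tfrac{1}{6}I$. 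Regarding that corner as a type $II_1$ algebra in its own right (Remark \ref{rmrk:mat_alg_vNa}) and repeating, I obtain a decreasing sequence of projections $I = P_0 \ge P_1 \ge \cdots$ with $\dim_{\tau}(P_k) = (\tfrac16)^k I$, unitaries $B^{(k)}$ on $P_k$, and for each $k$ four symmetries $R^{(k)}_1, \dots, R^{(k)}_4$ (extended by the identity off $P_{k-1}$) with
\[
U = \Big( \prod_{j=1}^{k} R^{(j)}_1 R^{(j)}_2 R^{(j)}_3 R^{(j)}_4 \Big) W_k, \qquad W_k = B^{(k)} P_k + (I - P_k).
\]

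Next I would settle convergence. Since $\dim_{\tau}(P_k) \to 0$ and the $P_k$ decrease, their infimum has dimension $0$ and hence $P_k \to 0$ in the strong-operator topology; consequently $W_k \to I$ strongly, so the partial products of the stage-symmetries converge strongly to $U$. Thus $U$ is realized as a strong-operator convergent infinite product of symmetries, four per stage.

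The decisive step is to collapse this infinite product into a product of six genuine symmetries. The mechanism is Corollary \ref{cor:diag_uni_sa}: a family of symmetries supported on mutually orthogonal projections has, as a strong-operator sum, a single symmetry for its product. I would record the mutually orthogonal ``retired'' regions $D_k := P_{k-1} - P_k$, which satisfy $\sum_k D_k = I$, and note that the stage-$k$ symmetries are built (Remark \ref{rmrk:symprod_vNa}) from matrix-unit swaps between the blocks of $P_{k-1}$ together with twists appearing in conjugate pairs $(V, V^*)$. The swaps carry the active region $P_k$ into $D_k$, giving the accumulated product a shift-like action along the orthogonal frame $\{D_k\}$, while the conjugate-paired twists are precisely what Remark \ref{rmrk:symprod_vNa} expresses as products of two symmetries. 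Organizing these over the orthogonal pieces $D_k$ and invoking Corollary \ref{cor:diag_uni_sa} together with the characterization of $\sS(\mathscr{R})^2$ (Proposition \ref{prop:prod-two-symm}), I would gather the whole family into three unitaries, each equivalent to its own adjoint and hence a product of two symmetries, for a total of six.

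The hard part will be exactly this collapse. The active regions $P_k$ are \emph{nested} rather than mutually orthogonal, so symmetries from different stages genuinely overlap on the tail $P_k = D_{k+1} + D_{k+2} + \cdots$ and do not commute; a careless grouping merely reproduces the unbounded count $4k$. Controlling this overlap -- choosing the matrix units and the feed-in unitaries at each stage so that the net effect along $\{D_k\}$ is a single shift together with two conjugate-paired diagonal unitaries -- is the entire content of the theorem. It is also where the bound stalls at six rather than the conjectured four: the nesting forces one extra product-of-two-symmetries beyond the clean orthogonal collapse, consistent with the fact that $\sS(\mathscr{R})^4 = \sU(\mathscr{R})$ is left open.
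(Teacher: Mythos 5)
Your proposal has a genuine gap, and it is exactly the one you name yourself in the final paragraph: the collapse of the infinite nested product into six symmetries is never carried out, and with the serial recursion you set up it cannot be carried out by the tools you cite. Because you recurse into a shrinking corner, the four symmetries produced at stage $k$ are supported (modulo the identity) on $P_{k-1}$, and these supports are nested: every stage-$k$ symmetry overlaps every later one on the tail $P_k$. Corollary \ref{cor:diag_uni_sa} requires the pieces being summed to live under \emph{mutually orthogonal} projections, so it simply does not apply to your family of stage-symmetries, and the appeal to ``shift-like action along $\{D_k\}$'' plus Proposition \ref{prop:prod-two-symm} to produce ``three unitaries, each equivalent to its own adjoint'' is speculation rather than an argument. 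What you have actually proved is only that $U$ lies in the strong-operator closure of $\bigcup_k \mathcal{S}(\mathscr{R})^{4k}$, which is far weaker than $U \in \mathcal{S}(\mathscr{R})^6$.

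The paper's proof avoids the nesting problem entirely by organizing the reduction \emph{in parallel} rather than serially. One fixes at the outset a sequence of mutually orthogonal projections $F^{(n)}$, all commuting with $U$ (chosen inside a masa containing $U$), with $\dim_{\tau}(F^{(n)}) = \frac{3}{4^n}I$ and $\sum_n F^{(n)} = I$, so that $U = \sum_n UF^{(n)}$ is genuinely block-diagonal. Lemma \ref{lem:four_sym_eps} is then applied to each block $UF^{(n)}$ separately; since the blocks are orthogonal, Corollary \ref{cor:diag_uni_sa} legitimately merges the four symmetries per block into four \emph{global} symmetries $R_1,\dots,R_4$. The second key idea, which your sketch has no analogue of, is the use of the free $B_1$-slot in Lemma \ref{lem:four_sym_eps}: the leftover unitary $B_2^{(n)}$ on $E_2^{(n)} \le F^{(n)}$ (of dimension $\frac{2}{4^{n+1}}I$) is transported by a partial isometry onto $E_1^{(n+1)} \le F^{(n+1)}$ (of the same dimension) and fed in as $B_1^{(n+1)} = V_n (B_2^{(n)})^* V_n^*$ for the next block's reduction. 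Consequently the total remainder $W = \sum_n \big(B_1^{(n)}E_1^{(n)} + B_2^{(n)}E_2^{(n)} + E_3^{(n)}\big)$ decomposes, after regrouping along the orthogonal projections $G^{(n)} = E_2^{(n)} + E_1^{(n+1)} + E_3^{(n+1)}$, into blocks of the form $\mathrm{diag}(B, B^*)$, each a product of two symmetries by Remark \ref{rmrk:symprod_vNa}; one more application of Corollary \ref{cor:diag_uni_sa} makes $W$ a product of two symmetries, giving $U = R_1R_2R_3R_4W \in \mathcal{S}(\mathscr{R})^6$. In short: the theorem is proved not by taming an infinite nested product, but by never creating one --- orthogonal blocks plus the pre-loaded cancellation between adjacent blocks is the mechanism your proposal is missing.
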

\begin{proof}
Let $U$ be a unitary in $\mathscr{R}$, and $\mathscr{A}$ be a maximal abelian self-adjoint subalgebra of $\mathscr{M}$ containing $U$. Using \cite[Proposition 3.13, Corollary 3.14]{diag_kadison}, we may inductively choose a sequence of mutually orthogonal projections $F^{(n)}$ in $\mathscr{A}$ (and thus, commuting with $U$) such that $\dim _{\tau} \big( F^{(n)} \big) = \frac{3}{4^n} I.$ Note that $\sum_{n \in \N} F^{(n)} = I$.

For $n \in \N$, we choose $E_1 ^{(n)}$ to be a subprojection of $F^{(n)}$ with $\dim _{\tau} (E_1 ^{(n)}) = \frac{2}{3} \dim _{\tau} (F^{(n)}) = \frac{2}{4^n}I$. With the aim of bringing Lemma \ref{lem:four_sym_eps} into action, below we inductively define three relevant sequences (with some desirable properties):
\begin{enumerate}[(i)]
    \item a sequence of projections $\{ E_2 ^{(n)} \}_{n \in \N}$ such that $E_2 ^{(n)}$ is a subprojection of $F^{(n)}$ orthogonal to $E_1 ^{(n)}$ with $\dim _{\tau} (E_2 ^{(n)}) = \frac{1}{6} \dim _{\tau} (F^{(n)}) = \frac{2}{4^{n+1}} I$,
    \item a sequence $\{ B_1 ^{(n)}\}$ of elements in $E_1 ^{(n)} \mathscr{R} E_1^{(n)}$,
    \item a sequence $\{ B_2 ^{(n)} \}$ of elements in $E_2 ^{(n)} \mathscr{R} E_2 ^{(n)}$.
\end{enumerate}

Set $B_1 ^{(1)} := E_1 ^{(1)}.$ For $n \in \N$, based on our knowledge of $B_1 ^{(n)}$, we define $E_2 ^{(n)}, B_2 ^{(n)}$ as follows. For the unitary $UF^{(n)}$ in the type $II_1$ von Neumann algebra $F^{(n)} \mathscr{R} F^{(n)}$, by Lemma \ref{lem:four_sym_eps}, we have four symmetries $R_i^{(n)} \in F^{(n)} \mathscr{R} F^{(n)}$ for $1 \le i \le 4$, a projection $E_2 ^{(n)} \le F^{(n)}$ orthogonal to $E_1 ^{(n)}$ with $\dim _{\tau} \big( E_2 ^{(n)} \big) = \frac{1}{6} \dim _{\tau} \big( F^{(n)} \big)$ and a unitary operator $B_2^{(n)}$ in $E_2^{(n)} \mathscr{R} E_2^{(n)}$ such that 
$$UF^{(n)} = R_1^{(n)} R_2^{(n)} R_3^{(n)} R_4^{(n)} \big( B_1 ^{(n)} E_1^{(n)} + B_2^{(n)}E_2^{(n)} + F^{(n)} - E_1^{(n)} - E_2^{(n)}\big).$$

For $n \in \N$, based on our knowledge of $E_2 ^{(n)}$ and $B_2 ^{(n)}$, we define $B_1^{(n+1)}$ as follows. Since $\dim _{\tau} (E_2 ^{(n)}) = \frac{2}{4^{n+1}} I = \dim _{\tau} (E_1 ^{(n+1)})$, there is a partial isometry $V_n$ in $\mathscr{R}$ with initial projection $E_2^{(n)}$ and final projection $E_1 ^{(n+1)}$. Define $B_1^{(n+1)} := V_n (B_2 ^{(n)}) ^* V_n^*$ and note that it is a unitary in $E_1 ^{(n+1)} \mathscr{R} E_1 ^{(n+1)}$. 

For $n \in \N$, let $E_3^{(n)} := F^{(n)} - E_1 ^{(n)} - E_2^{(n)}$. In this notation, we have $$UF^{(n)} = R_1^{(n)} R_2^{(n)} R_3^{(n)} R_4^{(n)} \big( B_1 ^{(n)} E_1^{(n)} + B_2^{(n)}E_2^{(n)} + E_3 ^{(n)} \big),$$
where $F^{(n)} = E_1 ^{(n)} + E_2^{(n)} + E_3 ^{(n)}$.

Recall that $\{F^{(n)}\}$ is a sequence of mutually orthogonal projections partitioning the identity operator. From Corollary \ref{cor:diag_uni_sa}, we conclude that for $1 \le j \le 4$ the operator $R_j := \sum_{n \in \N} R_j ^{(n)}$ is a symmetry in $\mathscr{R}$, and $W := \sum_{n \in \N} \big( B_1 ^{(n)} E_1^{(n)} +  B_2^{(n)}E_2^{(n)} + E_3^{(n)} \big)$ is a unitary in $\mathscr{R}$.

\begin{claim}
\textsl{
The unitary $W$ is a product of two symmetries in $\mathscr{R}$.
}
\end{claim}
\begin{claimpff}
For $n \in \N$, we define $E^{(n)} := E_2 ^{(n)} + E_1 ^{(n+1)}$. Recall that $E_2 ^{(n)} \sim E_1 ^{(n+1)}$ with the equivalence implemented by the partial isometry $V_n$. Using the corresponding $2 \times 2$ system of matrix units, it is straightforward to see that $B_2 ^{(n)} E_2 ^{(n)} + B_1^{(n+1)} E_1 ^{(n+1)}$ is of the form $\mathrm{diag}(B, B^*)$, which by Remark \ref{rmrk:mat_alg_vNa}, is a product of two symmetries in $E^{(n)} \mathscr{R} E^{(n)}$. With $G^{(n)} := E_2 ^{(n)} + E_1 ^{(n+1)} + E_3 ^{(n+1)}$, we observe that $$W_n := B_2 ^{(n)} E_2 ^{(n)} + B_1^{(n+1)} E_1 ^{(n+1)} + E_3 ^{(n+1)},$$ is the product of two symmetries in $G^{(n)} \mathscr{R} G^{(n)}$. Furthermore, $\{ G^{(n)} \}$ is a sequence of mutually orthogonal projections such that 
\begin{align*}
\sum_{n \in \N} G^{(n)} &= \sum_{n \in \N} \big( E_2 ^{(n)} + E_1 ^{(n+1)} + E_3 ^{(n+1)} \big)\\
&= E_2 ^{(1)} + \sum_{n \ge 2} \big( E_1 ^{(n)} + E_2 ^{(n)} + E_3 ^{(n)} \big) \\
&= I - E_1^{(1)} - E_3 ^{(1)}.    
\end{align*}
Note that, 
\begin{align*}
    W &= \sum_{n \in \N} \big( B_1 ^{(n)} E_1^{(n)} +  B_2^{(n)}E_2^{(n)} + E_3 ^{(n)} \big)\\
    &= B_1 ^{(1)} E_1 ^{(1)} + E_3 ^{(1)} + \sum_{n \in \N} W_n\\
    &= E_1 ^{(1)} + E_3 ^{(1)}  + \sum_{n \in \N} W_n.
\end{align*}
From Corollary \ref{cor:diag_uni_sa}, we conclude that $W$ is the product of two symmetries in $\mathscr{R}$.
\end{claimpff}
Using Lemma \ref{lem:inf_div1}, we have $$R_1 R_2 R_3 R_4 W = \sum_{n \in \N} UF^{(n)} = U.$$ Thus $U$ can be decomposed as the product of six symmetries in $\mathscr{R}$.
\end{proof}

\subsection{Approximation of a unitary by products of four symmetries}

\begin{prop}
\label{prop:univ_Cstar_symprod}
\textsl{
Let $\theta \in [0, 1]$. Let $\mathfrak{A}$ be a unital $C^*$-algebra with identity $I$ with unitaries $U, V$ in $\fA$ such that $UV = \exp(2 \pi i \theta) VU$. Then there exist symmetries $R_1, R_2, R_3, R_4$ in $M_2(\mathfrak{A})$ such that $$\exp(\pi i \theta) \begin{bmatrix}
I & 0 \\
0 & I
\end{bmatrix} = R_1 R_2 R_3 R_4.$$
}
\end{prop}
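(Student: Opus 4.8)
The plan is to split the target into two factors, each of which is a product of two symmetries via Remark \ref{rmrk:symprod_vNa}. Write $\lambda := \exp(\pi i\theta)$, so $\lambda^2 = \exp(2\pi i\theta)$, and rewrite the hypothesis $UV = \exp(2\pi i\theta)VU$ as $UVU^* = \lambda^2 V$ (using that $U$ is unitary). The scalar $\lambda^2$ is the object that must be ``absorbed'': note that $\lambda I_2$ is manifestly \emph{not} of the form $\mathrm{diag}(W, W^*)$, so it cannot be a product of two symmetries directly, which is why four is the natural count.

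First I would set $A := \mathrm{diag}(\lambda V, \bar\lambda V^*) = \mathrm{diag}(W, W^*)$ with $W = \lambda V$. By Remark \ref{rmrk:symprod_vNa}, $A$ is a product of two symmetries, explicitly $A = R_1 R_2$ with $R_1 = \begin{bmatrix} 0 & I \\ I & 0\end{bmatrix}$ and $R_2 = \begin{bmatrix} 0 & \bar\lambda V^* \\ \lambda V & 0\end{bmatrix}$. Since the goal is $\lambda I_2 = AB$, this forces the second factor to be $B := A^*(\lambda I_2) = \mathrm{diag}(V^*, \lambda^2 V)$, and it remains to show that \emph{this particular} $B$ is also a product of two symmetries.

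The crux lies in handling $B$, and this is precisely where the commutation relation enters. Conjugating the lower-right entry of $B$ by $U^*$ turns $\lambda^2 V$ into $U^*(\lambda^2 V)U = V$, exactly because $UVU^* = \lambda^2 V$; hence $\mathrm{diag}(I, U^*)\,B\,\mathrm{diag}(I, U) = \mathrm{diag}(V^*, V)$, which is of the form $\mathrm{diag}(W', W'^{*})$ with $W' = V^*$ and so is a product of two symmetries by Remark \ref{rmrk:symprod_vNa}. Conjugating back by $\mathrm{diag}(I, U)$ carries those two symmetries to symmetries $R_3 = \begin{bmatrix} 0 & U^* \\ U & 0\end{bmatrix}$ and $R_4 = \begin{bmatrix} 0 & VU^* \\ UV^* & 0\end{bmatrix}$ in $M_2(\fA)$, and one checks $R_3 R_4 = B$.

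Finally I would record the four matrices $R_1, R_2, R_3, R_4$ and verify by a direct $2\times 2$ multiplication that $R_1 R_2 R_3 R_4 = AB = \mathrm{diag}(\lambda V V^*,\, \bar\lambda\,\lambda^2 V^* V) = \mathrm{diag}(\lambda, \lambda) = \lambda I_2$, using $\bar\lambda\lambda^2 = \lambda$ and the relation $UVU^* = \lambda^2 V$ once more. The only nonroutine insight is recognizing that the obstruction is the central scalar $\lambda^2$ and that conjugation by $U$, available thanks to the Weyl-type commutation relation, is exactly the tool that rotates it away; all remaining points, namely that each $R_i$ is self-adjoint and squares to $I_2$, are immediate from their off-diagonal/unitary form.
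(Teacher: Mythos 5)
Your proof is correct --- every computation checks out --- and while it shares the paper's broad strategy (an explicit factorization of $\exp(\pi i \theta)I$ in $M_2(\fA)$ into two unitaries, each a product of two off-diagonal symmetries, with the Weyl relation used exactly once), the decomposition itself and the way the relation enters are genuinely different. Writing $\lambda = \exp(\pi i \theta)$, the paper's factorization is
$$\lambda I = \big(R_1R_2\big)\big(R_3R_4\big), \qquad R_1R_2 = \mathrm{diag}(U,U^*), \qquad R_3R_4 = \lambda\, \mathrm{diag}(U^*,U),$$
with the scalar absorbed into the single factor $R_3 = \lambda \begin{bmatrix} 0 & U^*V \\ UV^* & 0 \end{bmatrix}$; there the commutation relation is used to verify that this unitary squares to the identity, hence is a symmetry. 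You instead factor $\lambda I = AB$ with $A = \mathrm{diag}(\lambda V, \bar\lambda V^*)$ and $B = \mathrm{diag}(V^*, \lambda^2 V)$, dispose of $A$ immediately by Remark \ref{rmrk:symprod_vNa} (the scalar is harmless there, since $A$ is of the form $\mathrm{diag}(W,W^*)$ with $W = \lambda V$), and invoke the relation through conjugation: $\mathrm{diag}(I,U^*)\,B\,\mathrm{diag}(I,U) = \mathrm{diag}(V^*,V)$, which reduces $B$ to the same remark and yields your $R_3, R_4$ as unitary conjugates of symmetries. The paper's version is more compact (four matrices and one block multiplication), whereas yours is more transparent about where the Weyl relation is indispensable --- it is precisely what makes $\lambda^2 V$ unitarily conjugate to $V$ --- and it derives the four symmetries rather than exhibiting them. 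One small caveat, which is purely motivational and not load-bearing: your parenthetical claim that $\lambda I$ cannot be a product of two symmetries because it is not of the form $\mathrm{diag}(W,W^*)$ is not an argument, since that form is sufficient but not necessary; the correct justification is Proposition \ref{prop:prod-two-symm}, as a product of two symmetries is unitarily equivalent to its adjoint, which fails for $\lambda I$ whenever $\lambda \neq \pm 1$.
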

\begin{proof}
Note that for the following four elements in $M_2(\fA)$,
$$R_1 := \begin{bmatrix}
0 & U\\
U^* & 0
\end{bmatrix},\; R_2 := \begin{bmatrix}
0 & I\\
I & 0
\end{bmatrix},\; R_3 := \exp(\pi i \theta) \begin{bmatrix}
0 & U^* V \\
UV^* & 0
\end{bmatrix},\; R_4 := \begin{bmatrix}
0 & V\\
V^* & 0
\end{bmatrix},$$ we have $$R_1 R_2 R_3 R_4 = \exp(\pi i \theta)
\begin{bmatrix}
I & 0\\
0 & I
\end{bmatrix}.$$ Clearly $R_1, R_2, R_4$ are symmetries and $R_3$ is a unitary. Since $UV = \exp(2 \pi i \theta) VU$, we have
 \begin{align*}
     R_3 ^2 &= \exp(2 \pi i \theta) \begin{bmatrix}
     U^*VUV^* & 0\\
     0 & UV^*U^*V
     \end{bmatrix}\\
     &= \exp(2 \pi i \theta) 
    \begin{bmatrix}
    \exp(-2 \pi i \theta) & 0\\
    0 & \exp(-2 \pi i \theta)
    \end{bmatrix}\\
    & = \begin{bmatrix}
    I & 0\\
    0 & I
    \end{bmatrix},
\end{align*}
and hence $R_3$ is also a symmetry.
\end{proof}

\begin{prop}
\label{prop:scalar_symprod}
\textsl{
Let $\mathscr{R}$ be a type $II_1$ von Neumann algebra. For every $\theta \in [0, 1]$, there are four symmetries $R_1, R_2, R_3, R_4$ in $\mathscr{R}$ such that $\exp(2 \pi i \theta) I = R_1 R_2 R_3 R_4$.
}
\end{prop}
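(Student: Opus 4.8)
The plan is to leverage Proposition \ref{prop:univ_Cstar_symprod} by realizing $\mathscr{R}$ as a $2\times 2$ matrix algebra over a smaller type $II_1$ von Neumann algebra and locating inside the latter a pair of unitaries obeying the appropriate rotation relation. Concretely, by Remark \ref{rmrk:mat_alg_vNa} (with $n=2$) I would write $\mathscr{R} \cong M_2(\mathscr{S})$ for a type $II_1$ von Neumann algebra $\mathscr{S}$ (a corner $E_1\mathscr{R}E_1$) with identity $I_{\mathscr{S}}$. If I can produce unitaries $U, V \in \mathscr{S}$ satisfying $UV = \exp(2\pi i s) VU$ for a suitable $s \in [0,1]$, then Proposition \ref{prop:univ_Cstar_symprod}, applied with $\fA = \mathscr{S}$ and its parameter equal to $s$, yields symmetries $R_1, R_2, R_3, R_4 \in M_2(\mathscr{S}) \cong \mathscr{R}$ with $R_1 R_2 R_3 R_4 = \exp(\pi i s)\, I$. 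Matching phases then requires $\exp(\pi i s) = \exp(2\pi i\theta)$.

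First I would reduce the range of $\theta$. The matching $\exp(\pi i s) = \exp(2\pi i\theta)$ with $s \in [0,1]$ is solvable, by $s = 2\theta$, exactly when $\theta \in [0, \tfrac12]$. For $\theta \in (\tfrac12, 1]$ I would pass to adjoints: since each $R_j$ is self-adjoint, the adjoint of a product of four symmetries is again a product of four symmetries (in reverse order), and $\exp(2\pi i\theta) = \overline{\exp(2\pi i(1-\theta))}$ with $1-\theta \in [0,\tfrac12)$. Hence it suffices to treat $\theta \in [0,\tfrac12]$, where I set $s := 2\theta \in [0,1]$ and need $U, V \in \mathscr{S}$ with $UV = \exp(4\pi i\theta) VU$.

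The heart of the matter is thus the following: \emph{every type $II_1$ von Neumann algebra $\mathscr{S}$ contains, for every real $\alpha$, unitaries $U, V$ with $UV = \exp(2\pi i\alpha) VU$}. For rational $\alpha = p/q$ this is elementary: the matrix-unit system of Remark \ref{rmrk:mat_alg_vNa} provides a unital copy of $M_q(\C)$ inside $\mathscr{S}$, and there one takes $U$ to be the $p$-th power of the clock matrix $\mathrm{diag}(1, \omega, \ldots, \omega^{q-1})$ and $V$ the cyclic shift, with $\omega = \exp(2\pi i/q)$; a direct computation gives $UV = \exp(2\pi i p/q) VU$. For the general (in particular irrational) $\alpha$ I would iterate the halving construction of Remark \ref{rmrk:mat_alg_vNa}: nesting matrix-unit systems of sizes $2, 4, 8, \ldots$ (all minimal projections at a given level being mutually equivalent, hence of equal dimension $2^{-n}I$) embeds a unital copy of the hyperfinite $II_1$ factor $\mathcal{R}$ into $\mathscr{S}$, the restriction of the center-valued trace to this copy being scalar. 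For irrational $\alpha$ the von Neumann algebra generated by a pair satisfying the rotation relation (with respect to its canonical trace) is itself $*$-isomorphic to $\mathcal{R}$, so the pair transports directly into our unital copy $\mathcal{R} \subseteq \mathscr{S}$.

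Assembling the pieces, with $U, V \in \mathscr{S}$ obeying $UV = \exp(2\pi i s) VU$ for $s = 2\theta$, Proposition \ref{prop:univ_Cstar_symprod} produces four symmetries in $M_2(\mathscr{S}) \cong \mathscr{R}$ whose product is $\exp(\pi i s)\, I = \exp(2\pi i\theta)\, I$, settling $\theta \in [0,\tfrac12]$; the adjoint reduction then covers $\theta \in (\tfrac12, 1]$. \textbf{The main obstacle} is the irrational case: I need \emph{exact} unitaries satisfying the rotation relation, not merely approximate ones, so a density or continuity argument in $\alpha$ is unavailable. The resolution hinges on the existence of a \emph{unital} copy of the hyperfinite $II_1$ factor inside an arbitrary type $II_1$ von Neumann algebra, where unitality (the matching of identities) is precisely what the dimension and matrix-unit machinery of Remark \ref{rmrk:mat_alg_vNa} supplies; verifying that this embedding is genuinely unital and trace-preserving, so that the transported unitaries live in $\mathscr{S}$ with identity $I_{\mathscr{S}}$, is the delicate point I would take care to nail down.
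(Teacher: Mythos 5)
Your proposal is correct and takes essentially the same route as the paper's proof: both rest on Proposition \ref{prop:univ_Cstar_symprod} applied to a pair of unitaries satisfying the rotation relation --- clock-and-shift (Weyl) matrices for rational angle, and the irrational rotation relation realized in the hyperfinite $II_1$ factor (the trace-GNS weak closure of $\sA_{\theta}$) for irrational angle --- transported into the given type $II_1$ algebra by a unital hyperfinite embedding. The only real differences are cosmetic: you amplify on the von Neumann algebra side ($\mathscr{R} \cong M_2(\mathscr{S})$, locating the unitaries in the corner $\mathscr{S}$) whereas the paper amplifies the $C^*$-algebra and unitally embeds $M_{2n}(\C)$ or $M_2(\sA_{\theta})$ into $\fR$; and your explicit phase matching ($s = 2\theta$ together with the adjoint trick for $\theta \in (\tfrac{1}{2},1]$) carefully accounts for the half-angle $\exp(\pi i \theta)$ produced by Proposition \ref{prop:univ_Cstar_symprod}, a factor-of-two point the paper's write-up passes over silently.
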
 
\begin{proof}
Let $\fR$ be the hyperfinite $II_1$ factor. Recall that $\fR$ unitally embeds into every type $II_1$ von Neumann algebra. Thus we need only prove the assertion for $\mathscr{R} =\fR$.
\vskip 0.1in
\noindent {\bf Case I:} $\theta$ is rational.
\vskip 0.1in
Let $\theta = \frac{m}{n}$.  There are Weyl unitaries $U, V$ in $M_n(\C)$ such that $UV = \exp(2 \pi i \theta) VU$. By Proposition \ref{prop:univ_Cstar_symprod}, $\lambda I_{2n}$ can be decomposed as the product of four symmetries in $M_{2n}(\C)$. Note that there is a unital embedding $M_{2n}(\C) \hookrightarrow \mathscr{R}$. Thus $\lambda I$ can be decomposed as the product of four symmetries in $\mathscr{R}$.
\vskip 0.1in
\noindent {\bf Case II:} $\theta$ is irrational.
\vskip 0.1in
Let $\sA _{\theta}$ denote the irrational rotation algebra corresponding to $\theta$. There are unitaries $U, V \in \sA _{\theta}$ such that $UV = \exp(2 \pi i \theta) VU$. By Proposition \ref{prop:univ_Cstar_symprod}, $\lambda I$ can be decomposed as the product of four symmetries in $M_2(\sA _{\theta})$. Note that there is a unital embedding $M_{2}(\sA_{\theta}) \hookrightarrow \mathscr{R}$. Thus $\lambda I$ can be decomposed as the product of four symmetries in $\mathscr{R}$.
\end{proof}

\begin{thm}
\label{thm:mainthm3}
{\sl Let $\mathscr{R}$ be a type $II_1$ von Neumann algebra and $U$ be a unitary operator in $\mathscr{R}$ with finite spectrum. Then there are four symmetries $R_1, R_2, R_3, R_4$ in $\mathscr{R}$ such that $U = R_1 R_2 R_3 R_4$.
}
\end{thm}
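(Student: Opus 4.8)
The plan is to reduce the statement, one eigenvalue at a time, to the scalar case already established in Proposition~\ref{prop:scalar_symprod}. Since $U$ has finite spectrum, I would first write its spectral decomposition $U = \sum_{k=1}^m \lambda_k P_k$, where $\lambda_1, \dots, \lambda_m$ are the distinct elements of $\mathrm{sp}(U) \subseteq S^1$ and $P_1, \dots, P_m$ are the associated spectral projections in $\mathscr{R}$; these are mutually orthogonal and satisfy $\sum_{k=1}^m P_k = I$. On the corner $P_k \mathscr{R} P_k$ the operator $U$ restricts to $\lambda_k P_k$, i.e.\ to $\lambda_k$ times the identity of that corner, so the finiteness of the spectrum decouples the whole problem into $m$ independent scalar problems living in mutually orthogonal corners.

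The key step is to apply Proposition~\ref{prop:scalar_symprod} inside each corner, and for this I must verify that $P_k \mathscr{R} P_k$ is itself a type $II_1$ von Neumann algebra. It is finite, being a corner of a finite algebra, and it is purely of type $II$: a nonzero abelian projection $Q \le P_k$ in $P_k \mathscr{R} P_k$ satisfies $Q(P_k \mathscr{R} P_k)Q = Q \mathscr{R} Q$, so it would be a nonzero abelian projection in $\mathscr{R}$, which is impossible since $\mathscr{R}$ is of type $II$. Hence each $P_k \mathscr{R} P_k$ is of type $II_1$, and writing $\lambda_k = \exp(2\pi i \theta_k)$ and invoking Proposition~\ref{prop:scalar_symprod} in $P_k \mathscr{R} P_k$ (whose identity is $P_k$) yields four symmetries $R_1^{(k)}, R_2^{(k)}, R_3^{(k)}, R_4^{(k)}$ in $P_k \mathscr{R} P_k$ with $\lambda_k P_k = R_1^{(k)} R_2^{(k)} R_3^{(k)} R_4^{(k)}$.

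Finally I would assemble these corner-wise factorizations. For $1 \le j \le 4$ set $R_j := \sum_{k=1}^m R_j^{(k)}$. Each $R_j$ is self-adjoint, and because the summands lie in the mutually orthogonal corners $P_k \mathscr{R} P_k$ one has $R_j^2 = \sum_{k=1}^m \big( R_j^{(k)} \big)^2 = \sum_{k=1}^m P_k = I$, so each $R_j$ is a symmetry in $\mathscr{R}$. The same orthogonality kills all cross terms in the product, giving $R_1 R_2 R_3 R_4 = \sum_{k=1}^m R_1^{(k)} R_2^{(k)} R_3^{(k)} R_4^{(k)} = \sum_{k=1}^m \lambda_k P_k = U$. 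I expect the only genuine point requiring care to be the verification that each corner $P_k \mathscr{R} P_k$ is of type $II_1$, so that Proposition~\ref{prop:scalar_symprod} applies; after that the proof is a routine orthogonal assembly. It is worth emphasizing that no determinant-type obstruction of the kind appearing in Theorem~\ref{thm:mainthm1} intervenes, precisely because in a type $II_1$ algebra the ``determinant phase'' of a symmetry can take any value, which is exactly what allows the scalars $\lambda_k$ to be absorbed corner by corner.
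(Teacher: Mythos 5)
Your proposal is correct and follows essentially the same route as the paper's own proof: spectral decomposition of $U$ into $\sum_k \lambda_k P_k$, application of Proposition~\ref{prop:scalar_symprod} in each corner $P_k \mathscr{R} P_k$, and orthogonal assembly of the resulting symmetries. Your explicit verification that each corner is of type $II_1$ (a point the paper merely asserts) is a welcome addition, and the rest matches the paper's argument step for step.
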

\begin{proof}
Let the spectrum of $U$ be $\{ \lambda_1, \cdots, \lambda_n \}.$ There are mutually orthogonal projections $E ^{(1)}, \cdots, E ^{(n)}$ such that $U = \sum_{k=1}^n \lambda_k E ^{(k)}$ and $\sum_{k=1}^n E ^{(k)} = I$. For $1 \le k \le n$, the von Neumann algebra $E ^{(k)} \mathscr{R} E ^{(k)}$ acting on $E ^{(k)}(\mathscr{H})$ is a type $II_1$ von Neumann algebra. From Proposition \ref{prop:scalar_symprod}, there are four symmetries $R_{1}^{(k)}, R_{2}^{(k)}, R_{3}^{(k)}, R_{4}^{(k)}$ in $E^{(k)} \mathscr{R} E^{(k)}$ such that $$\lambda_k E^{(k)} = R_{1}^{(k)} R_{2}^{(k)} R_{3}^{(k)} R_{4}^{(k)}, 1 \le k \le n.$$  We identify $\oplus_{k=1}^n E^{(k)} \mathscr{R} E^{(k)}$ in the canonical manner as a von Neumann subalgebra of $\mathscr{R}$. For $1 \le j \le 4$, we define $R_j := \sum_{k=1}^n R_{j}^{(k)}$. Note that $R_j$ is a symmetry in $\mathscr{R}$ as it is self-adjoint and $R_j^2 = \sum_{k=1}^n (R_{j}^{(k)})^2 = \sum_{k=1}^n E^{(k)} = I$, and $U = R_1 R_2 R_3 R_4.$
\end{proof}

\begin{cor}
\textsl{
Let $\mathscr{R}$ be a type $II_1$ von Neumann algebra. Then every unitary in $\mathscr{R}$ can be approximated in norm by a sequence of unitaries in $\mathcal{S}(\mathscr{R})^4$, that is, $\mathcal{U}(\mathscr{R}) = \big( \mathcal{S}(\mathscr{R})^4 \big)^{=}$.
}
\end{cor}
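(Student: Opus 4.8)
The plan is to establish the two inclusions separately. The inclusion $\big( \mathcal{S}(\mathscr{R})^4 \big)^{=} \subseteq \mathcal{U}(\mathscr{R})$ is immediate, since $\mathcal{S}(\mathscr{R})^4 \subseteq \mathcal{U}(\mathscr{R})$ and the unitary group $\mathcal{U}(\mathscr{R})$ is norm-closed (a norm-limit of unitaries is again a unitary, as the relations $X^*X = XX^* = I$ are preserved under norm limits). The substantive content is the reverse inclusion $\mathcal{U}(\mathscr{R}) \subseteq \big( \mathcal{S}(\mathscr{R})^4 \big)^{=}$, for which it suffices, by Theorem \ref{thm:mainthm3}, to prove that every unitary in $\mathscr{R}$ can be norm-approximated by unitaries in $\mathscr{R}$ with finite spectrum.

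To carry this out, I would fix a unitary $U \in \mathscr{R}$ and $\varepsilon > 0$, and let $E(\cdot)$ denote the spectral measure of $U$. Since $\mathscr{R}$ is weakly closed and contains $U$, all spectral projections $E(A)$ (for Borel $A \subseteq S^1$) lie in $\mathscr{R}$. I would then partition the unit circle $S^1$ into finitely many pairwise disjoint Borel arcs $A_1, \ldots, A_m$, each of diameter less than $\varepsilon$, choose a point $\lambda_k \in A_k \cap S^1$ for each $k$, define the Borel step function $f : S^1 \to S^1$ by $f(\lambda) = \lambda_k$ for $\lambda \in A_k$, and set $U_\varepsilon := f(U) = \sum_{k=1}^m \lambda_k E(A_k)$ via the Borel functional calculus.

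Next I would verify that $U_\varepsilon$ is a unitary in $\mathscr{R}$ with finite spectrum contained in $\{ \lambda_1, \ldots, \lambda_m \}$ (the projections $E(A_k)$ are mutually orthogonal with sum $I$, and each $|\lambda_k| = 1$), and that
$$\| U - U_\varepsilon \| = \| (\mathrm{id} - f)(U) \| \le \sup_{\lambda \in \mathrm{sp}(U)} | \lambda - f(\lambda) | < \varepsilon,$$
where the equality is the norm identity for the functional calculus of the normal operator $U$, and the final bound holds because $\lambda$ and $f(\lambda)$ always lie in a common arc of diameter less than $\varepsilon$. By Theorem \ref{thm:mainthm3} the finite-spectrum unitary $U_\varepsilon$ belongs to $\mathcal{S}(\mathscr{R})^4$; letting $\varepsilon \to 0$ yields $U \in \big( \mathcal{S}(\mathscr{R})^4 \big)^{=}$, as required.

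This is essentially routine spectral approximation, so I do not anticipate a genuine obstacle. The only point demanding a small amount of care is ensuring that $U_\varepsilon$ lies in $\mathscr{R}$ rather than merely in $\mathcal{B}(\mathscr{H})$; this is guaranteed because the spectral projections of $U$ belong to the abelian von Neumann algebra generated by $U$, which is contained in $\mathscr{R}$.
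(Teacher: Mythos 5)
Your proof is correct, and it rests on the same reduction as the paper's: both arguments produce, for each $\varepsilon > 0$, a finite-spectrum unitary in $\mathscr{R}$ within $\varepsilon$ of the given unitary, and then invoke Theorem \ref{thm:mainthm3}. The only difference is the mechanism of approximation. The paper first writes $W = \exp(2\pi i H)$ for a self-adjoint $H \in \mathscr{R}$ with spectrum in $[0,1]$ (citing \cite[Theorem 5.2.5]{kadison-ringrose1} for the existence of such a logarithm), approximates $H$ in norm by finite-spectrum self-adjoint operators $H_n$ --- the textbook form of the spectral theorem --- and then uses the uniform continuity of $t \mapsto \exp(2\pi i t)$ to conclude $\exp(2\pi i H_n) \to W$ in norm. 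You instead apply the Borel functional calculus directly to $U$, using a step function subordinate to a partition of $S^1$ into arcs of diameter less than $\varepsilon$. Your route is marginally more self-contained (no appeal to the existence of a self-adjoint logarithm) and gives the estimate in one step; the paper's route keeps all the measure-theoretic bookkeeping on the interval $[0,1]$, where finite-spectrum approximation of a self-adjoint operator is the standard off-the-shelf statement, at the cost of the extra exponentiation step. Both are routine and both are complete; your observation that the spectral projections $E(A_k)$ lie in $\mathscr{R}$ (being in the abelian von Neumann algebra generated by $U$) is exactly the right point of care.
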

\begin{proof}
Let $W$ be any unitary operator in $\mathscr{R}$. By \cite[Theorem 5.2.5]{kadison-ringrose1}, there is a self-adjoint operator $H$ in $\mathscr{R}$ with spectrum in $[0, 1]$ such that $\exp(2 \pi i H) = W$. Using the spectral theorem, $H$ may be approximated in norm by a sequence of self-adjoint operators $\{ H_n \}_{n \in \mathbb{N} }$ in $\mathscr{R}$ which have finite spectrum contained in $[0, 1]$. Let $H_n := \sum_{j=1}^n \lambda _j E_j$. From Theorem \ref{thm:mainthm3}, for $n \in \mathbb{N}$, $W_n := \exp(2 \pi i H_n)$ is the product of four symmetries. As the function $t \rightarrow \exp(2 \pi i t)$ is continuous from $[0, 1]$ to $S^1$, the sequence of unitaries $\{ W_n \}_{n \in \mathbb{N}}$ converges to $W$ in norm.  
\end{proof}

\section{Products of Symmetries in a von Neumann algebra}

In this section, $\mathscr{M}$ denotes a von Neumann algebra acting on the Hilbert space $\mathscr{H}$. We investigate the evolution of the sequence $\{ \sS (\mathscr{M})^n \}_{n \in \N}$.

\begin{prop}[cf. {\cite[Theorem 3]{radjavi_williams}}]
\label{prop:prod-two-symm}
{\sl Let $\mathscr{M}$ be a von Neumann algebra. A unitary $U$ in $\mathscr{M}$ may be decomposed as the product of two symmetries in $\mathscr{M}$ if and only if there is a unitary $W$ in $\mathscr{M}$ such that $U^* = W^*UW$, that is, $U$ and $U^*$ are unitarily equivalent in $\mathscr{M}$. }
\end{prop}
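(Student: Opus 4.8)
The plan is to prove both implications by reducing to a single clean reformulation: a unitary $U$ is a product of two symmetries in $\mathscr{M}$ if and only if there is a symmetry $R \in \mathscr{M}$ with $RUR = U^*$. The forward direction of this reformulation is immediate, since $U = R_1 R_2$ gives $R_1 U R_1 = R_1 R_1 R_2 R_1 = R_2 R_1 = U^*$. For the reverse, given a symmetry $R$ with $RUR = U^*$, I would set $R_2 := RU$; multiplying $RUR = U^*$ on the right by $R$ yields $RU = U^*R$, whence $R_2^* = U^* R = RU = R_2$ and $R_2^2 = (RU)(RU) = (RUR)U = U^*U = I$, so $R_2$ is a symmetry and $R R_2 = R^2 U = U$. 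Thus everything reduces to producing a symmetry that implements the equivalence $U \sim U^*$.

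With this reformulation in hand, the easy implication of the Proposition follows by taking $W = R_1$: if $U = R_1 R_2$ then $W^* U W = R_1 U R_1 = U^*$, exhibiting $U$ and $U^*$ as unitarily equivalent in $\mathscr{M}$.

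The substance is the converse, where I am handed only a \emph{general} unitary $W \in \mathscr{M}$ with $W^* U W = U^*$ and must upgrade it to a symmetry $R$ with $RUR = U^*$. First I would record two consequences: taking adjoints gives $W^* U^* W = U$, and these two relations together force $W^2$ to commute with $U$, since $(W^2)^* U W^2 = W^*(W^* U W)W = W^* U^* W = U$. Hence $W^2$ lies in the relative commutant $\mathscr{N} := \mathscr{M} \cap \{U\}'$, which is itself a von Neumann algebra. The key idea is then to correct $W$ by a unitary drawn from $\mathscr{N}$: since $W^2$ is a unitary in $\mathscr{N}$, Borel functional calculus produces a unitary square root $D \in \mathscr{N}$ of $(W^2)^{-1}$, so $D^2 = (W^2)^{-1}$. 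Because $D$ is a Borel function of $W^2$ it lies in the abelian von Neumann algebra generated by $W^2$, hence commutes with $W$; and because $D \in \mathscr{N}$ it commutes with $U$.

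I would then set $R := WD$ and verify it is the desired symmetry. It is a product of unitaries, hence unitary, and $R^2 = WDWD = W^2 D^2 = W^2 (W^2)^{-1} = I$; a unitary squaring to the identity is self-adjoint, so $R$ is a symmetry. Finally, using that $D$ commutes with both $U$ and $W$ and that $DWD = W D^2 = W^{-1}$, one computes $RUR = WDUWD = WU(DWD) = WU W^{-1} = WUW^* = U^*$, where $WUW^* = U^*$ is the other form of the hypothesis obtained from $WU = U^* W$. Invoking the reformulation then completes the proof. The main obstacle is exactly this last construction — passing from an arbitrary unitary intertwiner to a self-adjoint one — and the crux is the observation that the ambiguity $W^2$ already lives in the relative commutant $\mathscr{N}$, so it can be ``halved'' by a square root inside $\mathscr{N}$ without disturbing the intertwining relation. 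The only technical point requiring care is the functional-calculus bookkeeping: that the square root can be chosen inside $\mathscr{N}$ and that it commutes with $W$.
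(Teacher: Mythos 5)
Your proposal is correct and takes essentially the same route as the paper's proof: both observe that $W^2$ commutes with $U$, extract a square root of $W^2$ (equivalently, of $(W^2)^{-1}$) inside the abelian von Neumann algebra generated by $W^2$ — you via a Borel branch of the square root, the paper via writing $W^2 = \exp(2\pi i H)$ and setting $V = \exp(\pi i H)$ — and then use the fact that this square root commutes with both $U$ and $W$ to correct $W$ to a symmetry implementing the equivalence of $U$ and $U^*$. Indeed your symmetry $R = WD$ is exactly the paper's $S = V^*W$ up to the choice of square root, and your preliminary reformulation (a symmetry intertwiner suffices) is the same observation the paper makes implicitly when it concludes from $US = SU^*$ that $U$ is the product of the two symmetries $US$ and $S$.
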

\begin{proof}
Let $U = ST$ where $S, T$ are symmetries in $\mathscr{M}$. Since $U^* = TS = S(ST)S = SUS$, we observe that $U$ and $U^*$ are unitarily equivalent with the unitary equivalence implemented by $S$.

Conversely, let $W$ be a unitary in $\mathscr{M}$ such that $U^* = W^*UW$. By taking adjoint on both sides, we have $U = W^* U^* W$ which implies that $U^* = WUW^*$. Thus $$W^2U^* = W^2 (W^*UW) = WUW = (WUW^*) W^2 =  U^* W^2.$$ Thus $W^2$ commutes with $U^*$ (and hence also with its multiplicative inverse $U$). Let $\mathscr{N}$ be the von Neumann subalgebra of $\mathscr{M}$ generated by $\{I, W^2 \}$. Clearly $\{ U, U^*, W, W^* \} \subseteq \mathscr{N}' \cap \mathscr{M}$, the relative commutant of $\mathscr{N}$ in $\mathscr{M}$. By \cite[Theorem 5.2.5]{kadison-ringrose1}, there is a positive operator $H$ in $\mathscr{N}$ such that $W^2 = \exp(2 \pi i H)$. Note that $V := \exp( \pi i H)$ is a unitary operator in $\mathscr{N}$ such that $V^2 = W^2$. Since $V \in \mathscr{N}$, we have that $V$ and $V^*$ commute with each of $U, U^*, W, W^*$. Thus $S = V^* W$ is a symmetry and $US = SU^* $. It follows that $U = S(US)$ is the product of two symmetries in $\mathscr{M}$.
\end{proof}

\begin{remark}
\label{rmrk:prod_two_symm}
In particular, Proposition \ref{prop:prod-two-symm} implies that the spectrum of an element in $\mathcal{S}(\mathscr{M})^2$ is symmetric about the real axis.
\end{remark}

\newpage
\begin{lem}
\label{lem:2by2mat}
\textsl{
\begin{enumerate}[(i)]
    \item Let $\mathscr{M}$ be a von Neumann algebra with {\bf no} direct summand of type $I_{2k-1}$ ($k \in \N$) in its type decomposition. Then there are two mutually orthogonal equivalent projections $E_1, E_2$ in $\mathscr{M}$ such that $E_1 + E_2 = I$.
    \item A von Neumann algebra $\mathscr{M}$ has {\bf no} direct summand of type $I_{2k-1}$ ($k \in \N$) in its type decomposition if and only if $\mathscr{M} \cong M_2(\mathscr{S})$ for some von Neumann algebra $\mathscr{S}$.
\end{enumerate}
}
\end{lem}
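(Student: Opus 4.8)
The plan is to establish (i) first, by reducing to the central type decomposition of $\mathscr{M}$ and halving the identity separately on each summand, and then to deduce (ii) from (i) together with the matrix-unit construction recorded in Remark \ref{rmrk:mat_alg_vNa}.

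For (i), I would write $\mathscr{M}$ as the direct sum of its finite type $I$ part, its type $II_1$ part, and its properly infinite part (comprising the type $I_\infty$, $II_\infty$, and $III$ summands) via the central type decomposition (see \cite{kadison-ringrose2}), and halve the identity on each piece. On the properly infinite part I would invoke the halving lemma, which produces a projection $E$ with $E \sim I - E$. On the type $II_1$ part I would apply \cite[Corollary 3.15]{diag_kadison} with $n=2$, giving two orthogonal equivalent projections with sum $I$. The hypothesis is used precisely on the finite type $I$ part: since $\mathscr{M}$ has no type $I_{2k-1}$ summand, this part is a direct sum of homogeneous algebras of even multiplicity, $M_{2m}(\mathscr{C}_{2m}) \cong M_2\big(M_m(\mathscr{C}_{2m})\big)$, in which the block projections $\mathrm{diag}(I_m, 0)$ and $\mathrm{diag}(0, I_m)$ give the desired halving. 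Denoting the central projection of each summand by $Z_\alpha$ and its halving by $Z_\alpha = E_1^\alpha + E_2^\alpha$ with $E_1^\alpha \sim E_2^\alpha$ implemented by a partial isometry $V_\alpha = Z_\alpha V_\alpha$, I would assemble the global projections $E_1 := \sum_\alpha E_1^\alpha$ and $E_2 := \sum_\alpha E_2^\alpha$; because the $Z_\alpha$ are mutually orthogonal central projections summing to $I$, the sum $V := \sum_\alpha V_\alpha$ converges in the strong-operator topology (in the spirit of Lemma \ref{lem:inf_div1} and Corollary \ref{cor:diag_uni_sa}) to a partial isometry with $V^*V = E_1$ and $VV^* = E_2$, so $E_1 \sim E_2$ are orthogonal with $E_1 + E_2 = I$.

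For (ii), the implication $(\Leftarrow)$ rests on the fact that halvability of the identity passes to central summands: if $\mathscr{M} \cong M_2(\mathscr{S})$ then the block projections $E_1, E_2$ halve $I$ via a partial isometry $V$, and for any central projection $Z$ the cut-downs $ZE_1, ZE_2$ halve $Z$ via $ZV$. A homogeneous algebra of odd type $I_{2k-1}$ admits no such halving, since the center-valued dimension of a projection there takes values in $\big\{ \tfrac{j}{2k-1}\,1_{\mathscr{C}} : 0 \le j \le 2k-1 \big\}$ and can never equal $\tfrac{1}{2}\,1_{\mathscr{C}}$; hence $M_2(\mathscr{S})$ can have no type $I_{2k-1}$ summand. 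For $(\Rightarrow)$, part (i) supplies orthogonal equivalent projections $E_1, E_2$ with $E_1 + E_2 = I$ and a partial isometry $V$ with $V^*V = E_1$, $VV^* = E_2$; setting $V_{11} = E_1$, $V_{22} = E_2$, $V_{21} = V$, $V_{12} = V^*$ yields a system of $2 \times 2$ matrix units, and the algebraic isomorphism of Remark \ref{rmrk:mat_alg_vNa} (which applies verbatim to any von Neumann algebra carrying such matrix units, specialized to $n=2$) gives $\mathscr{M} \cong M_2(E_1 \mathscr{M} E_1)$, so one takes $\mathscr{S} = E_1 \mathscr{M} E_1$.

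I expect the main obstacle to be the finite type $I$ case of (i): one must justify that ``no type $I_{2k-1}$ summand'' forces the finite type $I$ part to be a direct sum of even-multiplicity homogeneous algebras, which draws on the structure theory of type $I$ von Neumann algebras, and one must pin down exactly why odd homogeneous multiplicity obstructs halving (the parity argument for the center-valued dimension, reused in the $(\Leftarrow)$ direction). The halving statements for the type $II_1$ and properly infinite parts are standard and citable, so the only genuinely delicate bookkeeping is assembling the summandwise halvings into global orthogonal equivalent projections, and this is controlled by the strong-operator convergence afforded by the mutual orthogonality of the central supports $Z_\alpha$.
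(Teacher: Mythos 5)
Your proof is correct and takes essentially the same approach as the paper: decompose $\mathscr{M}$ centrally into its properly infinite, type $II_1$, and even-homogeneous finite type $I$ parts, halve the identity on each summand, assemble, and deduce (ii) from (i) together with the $2 \times 2$ matrix-unit construction of Remark \ref{rmrk:mat_alg_vNa}. The paper is merely terser --- it cites \cite[Lemma 6.3.3, Theorems 8.4.3, 8.4.4]{kadison-ringrose2} for the summandwise halvings and leaves the assembly and the easy direction of (ii) as immediate --- whereas you spell out those details correctly, up to the harmless imprecision that a projection in a type $I_{2k-1}$ algebra need not have \emph{constant} center-valued dimension (pointwise its values $j/(2k-1)$ still never equal $\tfrac{1}{2}$, which is all your parity argument needs).
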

\begin{proof}
(i) Since $\mathscr{M}$ is a direct sum of a properly infinite von Neumann algebra and von Neumann algebras of types $I_{2k}$ and $II_1$, it suffices to consider the three cases in which either $\mathscr{M}$ is properly infinite, or type $I_{2k}$ ($k \in \N$) or type $II_1$. For a properly infinite von Neumann algebra, the assertion follows from \cite[Lemma 6.3.3]{kadison-ringrose2}. For a type $II_1$ von Neumann algebra or a type $I_{2k}$ von Neumann algebra ($k \in \N$), the assertion follows from \cite[Theorems 8.4.3, 8.4.4]{kadison-ringrose2}.
\vskip 0.1in

\noindent (ii) If $\mathscr{M}$ is of the form $M_2(\mathscr{S})$ for a von Neumann algebra $\mathscr{S}$, then clearly $\mathscr{M}$ has no direct summand of type $I_{2k-1}$ ($k \in \N$) in its type decomposition. The converse follows from part (i) and Remark \ref{rmrk:mat_alg_vNa}.
\end{proof}

\begin{cor}
\label{cor:prod_three_sym}
Let $\mathscr{M}$ be a von Neumann algebra with {\bf no} direct summand of type $I_{2k-1}$ ($k \in \N$) in its type decomposition, and $\alpha \in S^1 \subset \mathbb{C}$. The scalar unitary $\alpha I$ in $\mathscr{M}$ can be decomposed as the product of three symmetries in $\mathscr{M}$ if and only if $\alpha \in \{1, i, -1, -i \}$. 
\end{cor}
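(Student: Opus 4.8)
The plan is to prove the two implications separately, observing that the forward implication holds for an \emph{arbitrary} von Neumann algebra and that the hypothesis excluding odd type $I_{2k-1}$ summands is needed only for the converse.

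For the forward direction, suppose $\alpha I = R_1 R_2 R_3$ with each $R_j$ a symmetry in $\mathscr{M}$. Multiplying on the left by $R_1$ and using $R_1^2 = I$, I get $\alpha R_1 = R_2 R_3 \in \mathcal{S}(\mathscr{M})^2$. By Remark \ref{rmrk:prod_two_symm}, the spectrum of $R_2 R_3$ is symmetric about the real axis. Since $R_1$ is a symmetry, $\mathrm{sp}(R_1) \subseteq \{1, -1\}$, so $\mathrm{sp}(\alpha R_1) = \alpha\, \mathrm{sp}(R_1) \subseteq \{\alpha, -\alpha\}$. Requiring this finite set to be invariant under complex conjugation forces either $\alpha = \bar\alpha$ or $\alpha = -\bar\alpha$; combined with $|\alpha| = 1$, this gives $\alpha \in \{1, -1, i, -i\}$. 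The only mild care needed here is the spectral bookkeeping: the possible degeneracies $\mathrm{sp}(R_1) = \{1\}$, $\{-1\}$, or $\{1,-1\}$ must all be checked, but each is handled uniformly by the conjugation-invariance requirement.

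For the converse, I would invoke Lemma \ref{lem:2by2mat}(ii) to write $\mathscr{M} \cong M_2(\mathscr{S})$ for some von Neumann algebra $\mathscr{S}$ with identity $I_{\mathscr{S}}$. The cases $\alpha = 1$ and $\alpha = -1$ are immediate, since $I = I \cdot I \cdot I$ and $-I$ is itself a symmetry (so $-I = (-I)\cdot I \cdot I$). For $\alpha = i$, I would exhibit the three Pauli-type symmetries
$$R_1 = \begin{bmatrix} 0 & I_{\mathscr{S}} \\ I_{\mathscr{S}} & 0 \end{bmatrix}, \quad R_2 = \begin{bmatrix} 0 & -iI_{\mathscr{S}} \\ iI_{\mathscr{S}} & 0 \end{bmatrix}, \quad R_3 = \begin{bmatrix} I_{\mathscr{S}} & 0 \\ 0 & -I_{\mathscr{S}} \end{bmatrix}$$
in $M_2(\mathscr{S})$, verify directly that each is self-adjoint and squares to $I$, and check by a two-step matrix multiplication that $R_1 R_2 R_3 = i I$. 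The case $\alpha = -i$ then follows by taking adjoints: since each $R_j$ is a symmetry, $-iI = (iI)^* = R_3 R_2 R_1$ is again a product of three symmetries.

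The computations are routine; the conceptual content lies in recognizing that the obstruction to writing $iI$ or $-iI$ as three symmetries is exactly the presence of an odd type $I$ summand. This is why the $M_2$-structure furnished by Lemma \ref{lem:2by2mat} is the essential ingredient for the converse --- it is precisely what makes the Pauli construction available --- whereas the forward direction needs no structural hypothesis and rests only on the spectral symmetry of products of two symmetries established in Proposition \ref{prop:prod-two-symm} and Remark \ref{rmrk:prod_two_symm}. I expect the main (and only real) subtlety to be this localization of the hypothesis, rather than any individual calculation.
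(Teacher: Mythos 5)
Your proposal is correct and follows essentially the same route as the paper: the forward direction peels off one symmetry and applies the spectral symmetry of $\mathcal{S}(\mathscr{M})^2$ from Remark \ref{rmrk:prod_two_symm}, and the converse uses Lemma \ref{lem:2by2mat}(ii) with exactly the same Pauli-type matrices. The only cosmetic differences are that you multiply by $R_1$ on the left where the paper uses $R_3$ on the right, and you obtain the $\alpha = -i$ case by taking adjoints, which is a slightly tidier way to finish than the paper's appeal to a ``similar decomposition.''
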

\begin{proof}
Let $R_1, R_2, R_3$ be symmetries in $\mathscr{M}$ such that $\xi I = R_1 R_2 R_3$. We have $\xi R_3 = R_1 R_2 \in \mathcal{S}(\mathscr{R})^2$. If $\xi \ne \pm 1$, we must have $\overline{\xi} = - \xi$ in case of which we have $\xi = \pm i $.

Since $I, -I$ are symmetries, we need only prove the assertion for $\alpha = \pm i $. By Lemma \ref{lem:2by2mat}, (ii), we may assume that $\mathscr{M}\cong M_2(\mathscr{S})$ for some von Neumann algebra $\mathscr{S}$ with identity $I_{\mathscr{S}}$. Note that
$$
i I = 
\begin{bmatrix}
i I_{\mathscr{S}} & 0\\
0 & i I_{\mathscr{S}}
\end{bmatrix} = 
\begin{bmatrix}
0 & I_{\mathscr{S}}\\
I_{\mathscr{S}} & 0
\end{bmatrix}
\begin{bmatrix}
0 & -i I_{\mathscr{S}}\\
i I_{\mathscr{S}} & 0
\end{bmatrix}
\begin{bmatrix}
I_{\mathscr{S}} & 0\\
0 & -I_{\mathscr{S}}
\end{bmatrix}
$$
is a product of three symmetries in $M_2(\mathscr{S}) \cong \mathscr{M}$. We may deduce a similar decomposition of $-i I$.
\end{proof}

Note that for an odd positive integer $n$, the scalar unitary matrix $iI \in M_n(\C)$ has imaginary determinant and thus cannot be decomposed into the product of any finite number of symmetries, let alone the product of three symmetries.

For $1 \le k \le 4$, let $C_k := \big\{ \exp (2 \pi i \alpha) : \frac{k-1}{4} < \alpha < \frac{k}{4} \big\}$ denote the four connected components of $S^1 \backslash \{ 1, i, -1, -i \}$.

\begin{prop}
\label{prop:prod_three_sym}
\textsl{
Let $U$ be a unitary operator in $\mathcal{B}(\mathscr{H})$ such that $\mathrm{sp}(U) \subset C_k$ for some $k \in \{1, 2, 3, 4 \}$. Then $U$ cannot be decomposed as the product of three symmetries.
}
\end{prop}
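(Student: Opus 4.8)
The plan is to argue by contradiction, using the fact recorded in Remark \ref{rmrk:prod_two_symm} that a product of two symmetries has spectrum symmetric about the real axis. Suppose $U = R_1 R_2 R_3$ with each $R_j$ a symmetry in $\mathcal{B}(\mathscr{H})$. Since $R_3^2 = I$, we have $U R_3 = R_1 R_2$, a product of two symmetries, so $\mathrm{sp}(U R_3)$ is invariant under complex conjugation. The whole argument then reduces to showing that this is impossible, by proving that $\mathrm{sp}(U R_3)$ is forced to lie in a region of $S^1$ containing no conjugate pair.

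The key is the following containment claim, valid for \emph{any} symmetry $R$ and any unitary $U$ with $\mathrm{sp}(U)\subset C_k$: I claim $\mathrm{sp}(UR)\subseteq C_k \cup C_{k+2}$ (indices mod $4$). First I would reduce to $k=1$: writing $\alpha=(k-1)\pi/2$, the unitary $U' := e^{-i\alpha}U$ has $\mathrm{sp}(U')\subset C_1$, and since $\mathrm{sp}(UR) = e^{i\alpha}\,\mathrm{sp}(U'R)$ with $e^{i\alpha}(C_1\cup C_3) = C_k\cup C_{k+2}$, it suffices to treat $k=1$. For $k=1$ the spectrum $\mathrm{sp}(U)$ is a compact subset of the open arc, hence $\mathrm{sp}(U)\subset\{e^{i\theta}:\delta_1\le\theta\le\delta_2\}$ with $0<\delta_1\le\delta_2<\pi/2$; by the spectral theorem $\tfrac{U+U^*}{2}\ge(\cos\delta_2)I>0$ and $\tfrac{U-U^*}{2i}\ge(\sin\delta_1)I>0$, so every value $\langle U\psi,\psi\rangle$ with $\psi$ a unit vector lies in the open first quadrant, bounded away from both coordinate axes.

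Then for each point $e^{i\beta}$ outside $C_1\cup C_3$ (that is, $\beta\in[\pi/2,\pi]\cup[3\pi/2,2\pi]$) I would show $UR - e^{i\beta}I$ is invertible, which excludes $e^{i\beta}$ from $\mathrm{sp}(UR)$. Since $R$ is invertible and $UR-e^{i\beta}I = (U - e^{i\beta}R)R$, it is enough to invert $U-e^{i\beta}R$. For a unit vector $\psi$ one has $\langle(U-e^{i\beta}R)\psi,\psi\rangle = w - e^{i\beta}t$, where $w = \langle U\psi,\psi\rangle$ lies in the open first quadrant away from the axes and $t=\langle R\psi,\psi\rangle\in[-1,1]$, so $e^{i\beta}t$ runs over the segment through the origin in direction $e^{i\beta}$. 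For the listed $\beta$ this segment avoids the open first quadrant, so each $e^{i\beta}t$ has $\mathrm{Re}\le 0$ or $\mathrm{Im}\le 0$; comparing the relevant part with that of $w$ gives $|w - e^{i\beta}t|\ge \min(\cos\delta_2,\sin\delta_1)>0$ uniformly in $\psi$. The same bound holds for the adjoint, so $U-e^{i\beta}R$ is bounded below with dense range, hence invertible. This establishes the containment $\mathrm{sp}(UR)\subseteq C_k\cup C_{k+2}$.

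Finally, complex conjugation interchanges $C_1\leftrightarrow C_4$ and $C_2\leftrightarrow C_3$, so $C_k\cup C_{k+2}$ is carried to the complementary pair and is therefore disjoint from its own conjugate; it also contains no real point. Consequently any conjugation-invariant subset of $C_k\cup C_{k+2}$ is empty. But $\mathrm{sp}(UR_3)$ is nonempty and, by the claim, contained in $C_k\cup C_{k+2}$, so it cannot be symmetric about the real axis, contradicting the first paragraph. I expect the main obstacle to be precisely the containment claim in the infinite-dimensional setting, where one cannot justify it by tracking the motion of finitely many eigenvalues; the numerical-range/invertibility estimate above is what replaces that finite-dimensional heuristic.
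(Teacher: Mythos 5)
Your proof is correct, and it shares the paper's overall architecture: assume $U = R_1R_2R_3$, observe that $UR_3 = R_1R_2$ forces $\mathrm{sp}(UR_3)$ to be symmetric under complex conjugation (Proposition \ref{prop:prod-two-symm} / Remark \ref{rmrk:prod_two_symm}), and derive a contradiction by trapping $\mathrm{sp}(UR_3)$ inside $C_k \cup C_{k+2}$, a set disjoint from its own conjugate. Where you genuinely differ is in how that trapping is established. The paper takes $\xi = \exp\big(2\pi i\,\tfrac{2k-1}{8}\big)$, the midpoint of the arc $C_k$, notes $\|U - \xi I\| < 2\sin\tfrac{\pi}{8}$ by the spectral theorem, hence $\|R_1R_2 - \xi R_3\| < 2\sin\tfrac{\pi}{8}$, and then invokes the spectral-variation inequality for normal operators (every point of $\mathrm{sp}(A)$ lies within $\|A-B\|$ of $\mathrm{sp}(B)$) together with $\mathrm{sp}(\xi R_3) \subseteq \{\xi,-\xi\}$; since the points of $S^1$ at distance less than $2\sin\tfrac{\pi}{8}$ from $\pm\xi$ form exactly $C_k \cup C_{k+2}$, the containment follows. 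You replace this soft perturbation step with a hard numerical-range estimate: after rotating to $k=1$, you show each $e^{i\beta}\notin C_1\cup C_3$ lies in the resolvent set of $UR$ because the quadratic form of $U - e^{i\beta}R$ is uniformly bounded away from $0$ (the numerical range of $U$ sits in the open first quadrant away from both axes, while $e^{i\beta}[-1,1]$ avoids that quadrant), and bounded-below plus the same bound for the adjoint gives invertibility. Your route is more self-contained --- it uses only the spectral theorem for $U$ and elementary numerical-range facts, rather than the normality-dependent perturbation inequality --- and it isolates a clean general lemma: for any unitary $U$ with $\mathrm{sp}(U)\subset C_k$ and \emph{any} symmetry $R$, one has $\mathrm{sp}(UR)\subseteq C_k\cup C_{k+2}$. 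The paper's argument buys brevity and a transparent geometric picture (everything is a distance computation to the two arc midpoints), at the cost of quoting the spectral-distance fact for normal operators. Both proofs conclude identically from Remark \ref{rmrk:prod_two_symm}.
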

\begin{proof}
Let $\xi := \exp \big( 2 \pi i (\frac{2k-1}{8}) \big)$. Since $U$ is a normal operator with $\mathrm{sp}(U) \subset C_k$, from the spectral theorem we have that $$\|U - \xi I \| < \max_{\ell = k, k-1} \Big\{ \Big| \exp \Big(2 \pi i \big(\frac{\ell}{4}\big) \Big) - \exp \Big(2 \pi i \big(\frac{2k-1}{8}\big) \Big) \Big| \Big\} = 2 \sin \frac{\pi}{8}.$$ Let, if possible, $U = R_1 R_2 R_3$ for symmetries $R_1, R_2, R_3$. We note that $\| R_1 R_2 - \xi R_3 \| = \|(U - \xi I)R_3 \| < 2 \sin \frac{\pi}{8}$. As $R_3$ is a symmetry, $\mathrm{sp}(\xi R_3) \subseteq \{ \xi, -\xi \}$. 

Recall that if $A$ and $B$ are normal operators and $\lambda \in \mathrm{sp}(A)$, then the distance from $\lambda$ to $\mathrm{sp}(B)$ is at most $\| A - B \|$. Thus each element of $\mathrm{sp}(R_1 R_2)$ is atmost a distance of $2 \sin \frac{\pi}{8}$ away from $\{ \xi, -\xi \}$. We conclude that either $\mathrm{sp}(R_1 R_2)$ is contained inside $C_1 \cup C_3$ or inside $C_2 \cup C_4$ depending on $k$.

\begin{center}
\includegraphics[scale=0.32]{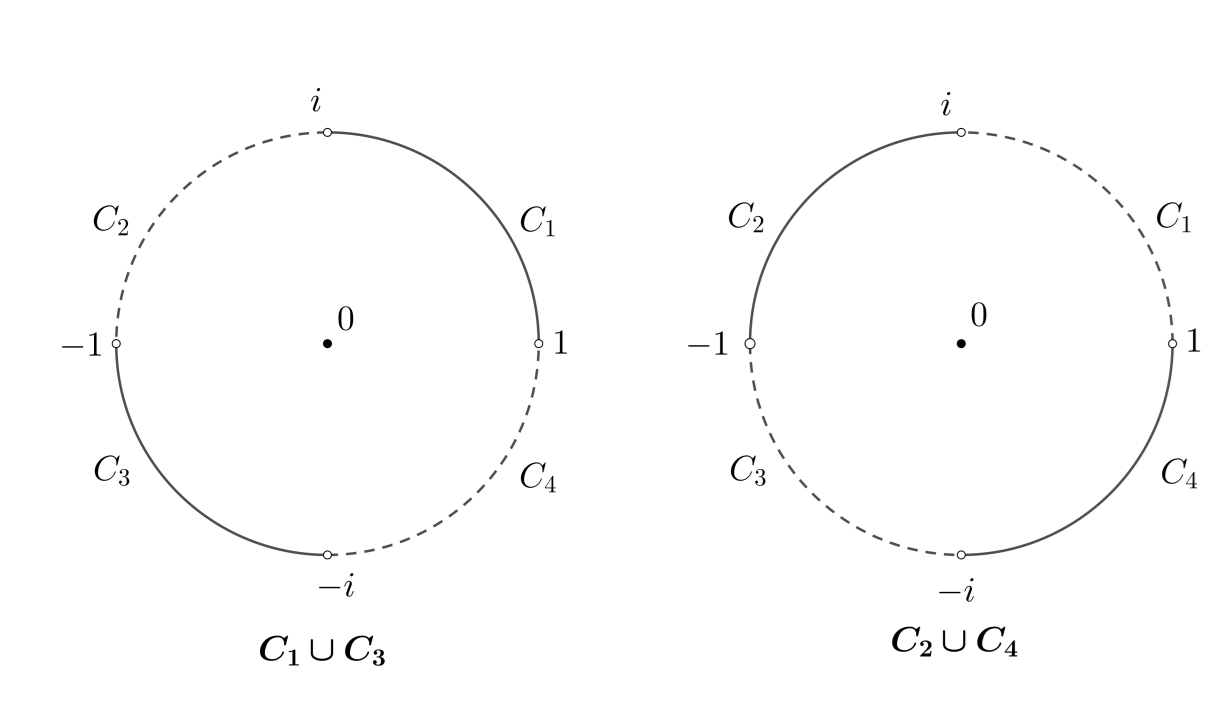}
\end{center}

For $z \in S^1$, we observe that $z \in C_1 \cup C_3$ if and only if $\overline{z} \notin C_1 \cup C_3$, and a similar conclusion also holds for $C_2 \cup C_4$. In view of Remark \ref{rmrk:prod_two_symm}, this leads to a contradiction. Thus our original assumption that $U$ is the product of three symmetries must be incorrect.
\end{proof}

\begin{remark}
Le $\mathscr{M}$ be a von Neumann algebra with identity operator $I$. Since the spectrum of $\exp( \frac{2 \pi i}{3}) I$ is $\big\{ \exp( \frac{2 \pi i}{3} ) \big\}$ which is contained in $S^1 \backslash \{ 1, i, -1 -i\}$, from Proposition \ref{prop:prod_three_sym} we conclude that $\exp( \frac{2 \pi i}{3}) I$ cannot be decomposed as the product of three symmetries in $\mathscr{M}$. In \cite[pg. 78]{halmos-kakutani}, Halmos and Kakutani show this using purely algebraic means whereas we view it through an operator-theoretic lens.
\end{remark}

\begin{thm}
\label{thm:mainthm4}
\textsl{
Let $\mathscr{M}$ be a von Neumann algebra. The set of unitaries with spectrum contained inside $C_k$ for some $k \in \{ 1, 2, 3, 4 \}$ is an open subset of $\mathcal{U}(\mathscr{M})$ in the uniform topology. Thus $\big( \sS (\mathscr{M})^3 \big)^{=} \ne \sU (\mathscr{M})$.
}
\end{thm}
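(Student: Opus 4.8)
The plan is to establish the two assertions in sequence, deriving the density statement as a consequence of the openness statement together with Proposition~\ref{prop:prod_three_sym}. First I would prove that the set
$$\mathcal{O} := \{ U \in \mathcal{U}(\mathscr{M}) : \mathrm{sp}(U) \subset C_k \text{ for some } k \in \{1,2,3,4\} \}$$
is open in the uniform topology. The key observation is that for each $k$, the condition $\mathrm{sp}(U) \subset C_k$ is a condition of positive distance: since $C_k$ is a (relatively) open arc of $S^1$, its closure $\overline{C_k}$ meets $\{1, i, -1, -i\}$ only in the two endpoints, and $\mathrm{sp}(U)$ is a compact subset of the open arc $C_k$. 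Hence $\delta := \dist\big(\mathrm{sp}(U), \{1, i, -1, -i\}\big) > 0$. I would then invoke the standard spectral-perturbation fact already used in Proposition~\ref{prop:prod_three_sym}: if $U'$ is a unitary (indeed any normal operator) with $\|U' - U\| < \delta$, then every point of $\mathrm{sp}(U')$ lies within distance $\delta$ of $\mathrm{sp}(U)$, so $\mathrm{sp}(U')$ avoids $\{1,i,-1,-i\}$ and, being connected-component-respecting for small enough perturbation, stays inside the same arc $C_k$. Thus the open ball of radius $\delta$ about $U$ lies in $\mathcal{O}$, proving $\mathcal{O}$ is open.

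One technical point deserves care: merely knowing $\mathrm{sp}(U')$ avoids the four ramification points does not by itself force $\mathrm{sp}(U') \subset C_k$ rather than spreading into another component. To handle this cleanly I would shrink the radius to $\delta' := \min\{\delta, \, \dist(\mathrm{sp}(U), S^1 \setminus C_k)\}$; since $\mathrm{sp}(U)$ is compact and contained in the open arc $C_k$, this quantity is strictly positive, and any $U'$ with $\|U' - U\| < \delta'$ has $\mathrm{sp}(U')$ contained in the $\delta'$-neighborhood of $\mathrm{sp}(U)$, which lies entirely within $C_k$. This is the step I expect to be the main (though still modest) obstacle: being precise about why small norm-perturbations cannot push the spectrum across an arc boundary, for which the uniform distance to the complementary closed set is the right gauge.

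Finally, to conclude $\big(\sS(\mathscr{M})^3\big)^{=} \ne \sU(\mathscr{M})$, I would argue by contradiction. Suppose $\sS(\mathscr{M})^3$ were norm-dense in $\sU(\mathscr{M})$. The set $\mathcal{O}$ is a nonempty open subset of $\sU(\mathscr{M})$ (nonempty because, e.g., $\exp(\tfrac{2\pi i}{3})I$ has spectrum a single point in $C_2$), so it would have to contain some element $U$ of $\sS(\mathscr{M})^3$. But $U \in \mathcal{O}$ means $\mathrm{sp}(U) \subset C_k$ for some $k$, and Proposition~\ref{prop:prod_three_sym} asserts precisely that such a unitary cannot be written as a product of three symmetries, contradicting $U \in \sS(\mathscr{M})^3$. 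Hence $\sS(\mathscr{M})^3$ is disjoint from the nonempty open set $\mathcal{O}$ and therefore cannot be dense, giving $\big(\sS(\mathscr{M})^3\big)^{=} \ne \sU(\mathscr{M})$ as claimed.

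\begin{remark}
Strictly speaking, Proposition~\ref{prop:prod_three_sym} above is stated for $U \in \mathcal{B}(\mathscr{H})$; its proof uses only the characterization of $\sS(\mathscr{M})^2$ from Proposition~\ref{prop:prod-two-symm} (valid in any von Neumann algebra) and the spectral-distance estimate for normal operators, so the identical argument applies verbatim to a unitary with $\mathrm{sp}(U) \subset C_k$ in an arbitrary von Neumann algebra $\mathscr{M}$.
\end{remark}
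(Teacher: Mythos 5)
Your proof is correct and follows essentially the same route as the paper's: compactness of the spectrum gives a positive distance to the boundary of the arc, the spectral perturbation inequality for normal operators gives openness, and Proposition \ref{prop:prod_three_sym} shows this nonempty open set is disjoint from $\mathcal{S}(\mathscr{M})^3$, so the latter cannot be norm-dense. Your only deviation is a cosmetic refinement: you gauge perturbations by $\dist\big(\mathrm{sp}(U), S^1 \setminus C_k\big)$ rather than the paper's distance to the two endpoints $\{1, i\}$ (the paper first reduces to $k=1$ by multiplying by a scalar unitary), which makes the ``spectrum stays in the same arc'' step immediate rather than implicit.
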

\begin{proof}
Without loss of generality by multiplication, we may consider a unitary $U$ in $\mathscr{M}$ such that $\mathrm{sp}(U) \subset C_1$. As $\mathrm{sp}(U)$ is compact, we have $\varepsilon := \mathrm{dist} \big( \mathrm{sp}(U), \{ 1, i \} \big) > 0$. If $V$ is a unitary in $\mathscr{M}$ such that $\| V - U \| < \varepsilon$, then $V$ has spectrum contained in $C_1$. This proves that the set of unitaries with spectrum contained inside $C_1$ is an open set. Further $C_1$ contains $\exp ( i \frac{\pi}{4} )I$ and is thus a non-empty open subset of $\mathcal{U}(\mathscr{M}) \backslash \mathcal{S}(\mathscr{M})^3 $ by Proposition \ref{prop:prod_three_sym}. As a result, we have $\big( \sS (\mathscr{M})^3 \big)^{=} \ne \mathcal{U}(\mathscr{M})$.
\end{proof}

\begin{thm}
\textsl{
Let $\mathscr{M}$ be a von Neumann algebra with no direct summand of type $I_n$ (for $n \in \N$) in its type decomposition. Then $\sS (\mathscr{M})^6 = \sU (\mathscr{M})$, and $\sS (\mathscr{M})^4$ is norm-dense in $\sU (\mathscr{M})$.
}
\end{thm}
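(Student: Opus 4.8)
The plan is to reduce to the two regimes already settled in the paper by invoking the type decomposition of $\mathscr{M}$. Since $\mathscr{M}$ has no type $I_n$ summand ($n \in \N$), its central (type) decomposition reads $\mathscr{M} \cong \mathscr{M}_{\mathrm{pi}} \oplus \mathscr{M}_{II_1}$, where $\mathscr{M}_{\mathrm{pi}}$ is the direct sum of the type $I_\infty$, type $II_\infty$, and type $III$ parts (hence properly infinite) and $\mathscr{M}_{II_1}$ is the type $II_1$ part; either summand may be zero, in which case the argument only simplifies. Under this identification a unitary $U \in \sU(\mathscr{M})$ corresponds to a pair $(U_1, U_2)$ with $U_1 \in \sU(\mathscr{M}_{\mathrm{pi}})$ and $U_2 \in \sU(\mathscr{M}_{II_1})$, a symmetry of $\mathscr{M}$ corresponds to a pair of symmetries, multiplication is performed coordinatewise, and the norm of an element of the direct sum is the maximum of the norms of its two components.

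For the first assertion, I would first apply Fillmore's theorem (see \cite{fillmore}), which gives $\sS(\mathscr{M}_{\mathrm{pi}})^4 = \sU(\mathscr{M}_{\mathrm{pi}})$ for the properly infinite summand, so that $U_1 = R_1 R_2 R_3 R_4$ for symmetries $R_j \in \mathscr{M}_{\mathrm{pi}}$; padding with two copies of the identity (itself a symmetry) then expresses $U_1$ as a product of six symmetries, say with $R_5 = R_6 = I$. On the other summand, Theorem \ref{thm:mainthm2} furnishes symmetries $T_1, \ldots, T_6 \in \mathscr{M}_{II_1}$ with $U_2 = T_1 \cdots T_6$. Setting $S_j := R_j \oplus T_j$ yields six symmetries of $\mathscr{M}$ whose product is $U_1 \oplus U_2 = U$, proving $\sS(\mathscr{M})^6 = \sU(\mathscr{M})$.

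For the density statement, given $\varepsilon > 0$ I would keep the exact decomposition $U_1 = R_1 R_2 R_3 R_4$ on $\mathscr{M}_{\mathrm{pi}}$ and use the corollary following Theorem \ref{thm:mainthm3} on the type $II_1$ summand to choose symmetries $T_1, T_2, T_3, T_4 \in \mathscr{M}_{II_1}$ with $\norm{U_2 - T_1 T_2 T_3 T_4} < \varepsilon$. The symmetries $S_j := R_j \oplus T_j$ of $\mathscr{M}$ then satisfy $S_1 S_2 S_3 S_4 = U_1 \oplus (T_1 T_2 T_3 T_4)$, and since the direct-sum norm is the maximum of the two component norms, this product lies within $\varepsilon$ of $U_1 \oplus U_2 = U$. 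Hence $\sS(\mathscr{M})^4$ is norm-dense in $\sU(\mathscr{M})$.

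The argument is essentially bookkeeping once the type decomposition is in place; the only point requiring a little care is the matching of the two summands' symmetry counts, handled above by padding the four-symmetry decomposition of the properly infinite part with identities to reach six. No genuine obstacle arises, as the two substantial inputs—Fillmore's four-symmetry theorem for properly infinite algebras and Theorem \ref{thm:mainthm2} for the type $II_1$ case—are already in hand.
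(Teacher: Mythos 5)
Your proof is correct and follows essentially the same route as the paper, whose proof is just the one-line citation of Theorem \ref{thm:mainthm2}, Theorem \ref{thm:mainthm3} (with its corollary), and Fillmore's four-symmetry theorem for properly infinite algebras; you have simply made explicit the type-decomposition bookkeeping (coordinatewise symmetries, padding by identities, and the max-norm on the direct sum) that the paper leaves implicit.
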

\begin{proof}
The assertion follows from Theorem \ref{thm:mainthm2}, Theorem \ref{thm:mainthm3} and \cite[pg. 900]{fillmore}.
\end{proof}

Let $\mathscr{M}$ be a von Neumann algebra with no direct summand of type $I_n$ (for $n \in \N$) in its type decomposition. From the results of this section, we have the following strict inclusions $\sS (\mathscr{M})^{=} \subset \big( \sS (\mathscr{M})^2 \big)^{=} \subset \big( \sS (\mathscr{M})^3 \big)^{=} \subset \big( \sS (\mathscr{M})^4 \big)^{=} = \sU (\mathscr{M}).$ Since $\big( \sS (\mathscr{M})^4 \big)^{=} = \big( \sS (\mathscr{M})^n \big)^{=}$ for all $n \ge 4$ and $\sS (\mathscr{M})^6 = \big( \sS (\mathscr{M})^4 \big)^{=}$, it looks plausible to conjecture that $\sS (\mathscr{M})^4 = \sS (\mathscr{M})^6 = \sU (\mathscr{M}). $

We end our discussion with a few open questions as fodder for further investigation. For a type $II_1$ von Neumann algebra $\mathscr{R}$, let $\mathfrak{N}_{\textrm{sym}}(\mathscr{R})$ denote the smallest positive integer $n$ such that $\mathcal{S}(\mathscr{R})^n = \mathcal{U}(\mathscr{R})$. By Theorem \ref{thm:mainthm2} and $\ref{thm:mainthm4}$, $4 \le \mathfrak{N}_{\textrm{sym}}(\mathscr{R}) \le 6$.

\begin{qn}
\textsl{
Let $\fR$ be the hyperfinite $II_1$ factor. Is 
$\mathfrak{N}_{\textrm{sym}}(\fR) = 4$?
}
\end{qn}

There are two extreme ends to the localization properties of the spectrum of unitary operators in a $II_1$ factor $\mathscr{R}$. Scalar unitaries correspond to the case where the spectrum is highly localized (supported at a point of $S^1$). Haar unitaries correspond to the case where the spectrum is highly non-localized (uniformly spread over $S^1$). We have shown that every scalar unitary in $\mathscr{M}$ is the product of four symmetries in $\mathscr{R}$. This brings us to our next question.
\begin{qn}
\textsl{
Can every Haar unitary in a $II_1$ factor $\mathscr{R}$ be decomposed as the product of four symmetries in $\mathscr{R}$?
}
\end{qn}

\begin{qn}
\textsl{
Let $\mathscr{R}$ be a $II_1$ von Neumann algebra. Is $\mathfrak{N}_{\textrm{sym}}(\mathscr{R}) = 4$? If $\mathscr{R} \cong M_2(\mathscr{R})$, is $\mathfrak{N}_{\textrm{sym}}(\mathscr{R}) = 4$?
}
\end{qn}

\noindent {\bf Acknowledgments:} B V Rajarama Bhat is supported by J C Bose Fellowship of SERB (Science and Engineering Research Board, India). Soumyashant Nayak is supported by Startup Research Grant (SRG/2021/002383) of SERB (Science and Engineering Research Board, India), and Startup-Grant by the Indian Statistical Institute (dated June 17, 2021). Shankar P is supported by Seed Money for New Research Initiatives (order No.CUSAT/ PL(UGC).A1/1112/2021) of CUSAT (Cochin University of Science and Technology, India).

\bibliographystyle{plain}
\bibliography{reference}

\end{document}